\newcolumntype{C}{>{\centering\arraybackslash}X}
\newcolumntype{R}{>{\raggedleft\arraybackslash}X}
\newcolumntype{L}{>{\raggedright\arraybackslash}X}
\algnewcommand{\IIf}[1]{\State\algorithmicif\ #1\ \algorithmicthen}
\algnewcommand{\EndIIf}{\unskip\ \algorithmicend\ \algorithmicif}
\algnewcommand{\IElse}{\unskip\ \algorithmicelse \unskip\ }
\newcommand{\multiline}[1]{%
    \begin{tabularx}{\dimexpr\linewidth-\ALG@thistlm}[t]{@{}X@{}}
        #1
    \end{tabularx}
}
\makeatletter \newcommand{\pgfplotsdrawaxis}{\pgfplots@draw@axis} \makeatother
\pgfplotsset{
    compat=1.18,
    every tick/.append style={color=black},
    after end axis/.append code={
            \pgfplotsset{
                axis line style=opaque,
                ticklabel style=opaque,
                tick style=opaque,
                grid=none
            }
            \pgfplotsdrawaxis
        },
} %
\newcommand{\one}{\bm{\mathrm{e}}}
\newtheorem{theorem}{Theorem}
\newtheorem{example}{Example}
\newtheorem{lemma}{Lemma}
\newtheorem{remark}{Remark}
\begin{document}

\title{Correction to: A Lagrangian dual method for two-stage robust optimization with binary uncertainties}

\author[1]{Henri Lefebvre}
\author[2]{Anirudh Subramanyam}
\affil[1]{Trier University, Trier, Germany}
\affil[2]{The Pennsylvania State University, University Park, PA, USA}

\maketitle

\begin{abstract}
    We provide a correction to the sufficient conditions under which closed-form expressions for the optimal Lagrange multiplier are provided in \citet{subramanyam2022lagrangian}. We first present a simple counterexample where the original conditions are insufficient, highlight where the original proof fails, and then provide modified conditions along with a correct proof of their validity. Finally, although the original paper discusses modifications to their method for problems that may not satisfy any sufficient conditions, we substantiate that discussion along two directions. We first show that computing an optimal Lagrange multiplier can still be done in polynomial time. We then provide complete and correct versions of the corresponding Benders and column-and-constraint generation algorithms in which the original method is used. We also discuss the implications of our findings on computational performance.
\end{abstract}

\section{Introduction}

In~\citet{subramanyam2022lagrangian}, the author considers two-stage robust optimization problems with binary-valued uncertain data and proposes a new method to construct worst-case parameter realizations in such problems. The method is embedded in traditional Benders decomposition and column-and-constraint generation generation algorithms to obtain a new exact solution method for two-stage robust optimization problems that is computationally superior compared to traditional methods. It relies on the development of a suitable Langrangian dual with a single Lagrange multiplier. Although not central to the method, the paper also provides practically computable closed-form expressions for the optimal Lagrange multiplier in problem where certain sufficient conditions are satisfied. We henceforth refer to~\citet{subramanyam2022lagrangian} and the developments therein as ``original'' to distinguish our own contributions.

In this paper, we show that the original sufficient conditions for optimality of the Lagrange multiplier, which we review in Section~\ref{sec:background}, are incorrect. We illustrate this using a simple single-variable problem in Section~\ref{sec:counterexample}. The majority of our paper then focuses on the theoretical and computational implications of this observation. Theoretically, we show that the same closed-form expression for the optimal multiplier continues to remain applicable when certain stronger conditions are satisfied; see Section~\ref{sec:corrected}.
These conditions may not be satisfied in practice; therefore, we discuss key implications of their absence in Section~\ref{sec:absence}. We first show in Section~\ref{sec:complexity} that an alternate closed-form expression for the optimal multiplier can still be computed using a polynomial time function of the input data under more general conditions. We supplement this theoretical analysis with practical algorithmic strategies in Section~\ref{sec:algo_modifications}, since our findings open the possibility that the originally reported computational results are no longer exact. Hence, we propose strategies that supplement the original method with a small number of easily implementable correction steps (termed ``modifications''), which allow us to recover an exact method for general two-stage problems that need not satisfy any extra conditions. We report numerical results in Section~\ref{sec:computational} to demonstrate that the computational overhead of the proposed modifications are marginal: they do not come at the expense of sacrificing the computational superiority of the original method. 

\section{Background}\label{sec:background}
To keep our presentation succinct, we adopt all notation and assumptions from the original paper, and consider the problem formulation denoted as $\mathcal{P}$, shown below.
\begin{equation}\label{eq:two_stage_ro_general}\tag{$\mathcal{P}$}
    \begin{aligned}
         & \inf_{\bm{x} \in \mathcal{X}} \sup_{\bm{\xi} \in \Xi} \, \mathcal{Q}(\bm{x}, \bm{\xi}), \\
         &                                                                                         %
        \mathcal{Q}(\bm{x}, \bm{\xi}) =
        \left[
            \begin{aligned}
                \mathop{\text{minimize}}_{\bm{y} \in \mathcal{Y}} \;\; & \bm{c}(\bm{\xi})^\top \bm{x}  + \bm{d}(\bm{\xi})^\top \bm{y} \\
                \text{subject to} \;                                   & \bm{T}\bm{x} + \bm{W} \bm{y} \geq \bm{h}(\bm{\xi})
            \end{aligned}
            \right].
    \end{aligned}
\end{equation}
Here, $\bm{x}$, $\bm{y}$ and $\bm{\xi}$ are the first- and second-stage decision variables, and the vector of uncertain parameters, respectively.
The feasible decision sets $\mathcal{X} \subseteq \mathbb{R}^{n_1}$ and $\mathcal{Y} \subseteq \mathbb{R}^{n_2}$, and the uncertainty set $\Xi \subseteq \{0, 1\}^{n_p}$ are assumed non-empty and mixed-integer linear programming (MILP) representable;
$\mathcal{X}$ is compact;
and $\mathcal{Q} : \mathbb{R}^{n_1} \times \{0, 1\}^{n_p} \to \mathbb{R} \cup \{-\infty, +\infty\}$ is the second-stage value function, which also includes the first-stage costs $\bm{c}(\bm{\xi})^\top \bm{x}$ for simplicity.
The optimal value of a minimization (maximization) problem is defined to be $-\infty$ ($+\infty$) if it is unbounded and $+\infty$ ($-\infty$) if it is infeasible.

The central idea of the method in \citet{subramanyam2022lagrangian} is the development of the following Lagrangian dual with scalar-valued multiplier $\lambda \in \mathbb{R}_{+}$.
\begin{align}
    \mathcal{L}(\bm{x}, \bm{\xi}, \lambda)
     & =\left[
        \begin{aligned}
            \mathop{\text{minimize}}_{\bm{y} \in \mathcal{Y}, \bm{z} \in \mathbb{R}^{n_p}_{+}} \;\; &
            \bm{c}(\bm{\xi})^\top \bm{x}  + \bm{d}(\bm{\xi})^\top \bm{y} + \lambda \phi(\bm{z}, \bm{\xi})                                                                      \\
            \text{subject to} \;\;                                                                  & \bm{T}\bm{x} + \bm{W} \bm{y} \geq \bm{h}(\bm{z}), \;\; \bm{z} \leq \one.
        \end{aligned}
        \right]
    \label{eq:lagrangian_general}                                        \\
    \phi(\bm{z}, \bm{\xi})
     & = \one^\top \bm{z} + \one^\top \bm{\xi} - 2 \bm{z}^\top \bm{\xi}.
    \label{eq:penalty_general}
\end{align}
Here, $\phi$ plays the role of a penalty function, as the uncertain parameters $\bm{\xi}$ appear only in the objective function of $\mathcal{L}$.
It is shown in~\citet[Theorems~1 and~2]{subramanyam2022lagrangian} that the Lagrangian dual has the following attractive properties.
\begin{align}
    \mathcal{Q}(\bm{x}, \bm{\xi})                                                       & = \sup_{\lambda \in \mathbb{R}_{+}} \mathcal{L}(\bm{x}, \bm{\xi}, \lambda)
    \; \text{ for all } \bm{x} \in \mathcal{X}, \bm{\xi} \in \Xi.
    \label{eq:strong_duality_general}
    \\
    \inf_{\bm{x} \in \mathcal{X}} \sup_{\bm{\xi} \in \Xi} \mathcal{Q}(\bm{x}, \bm{\xi}) & = \sup_{\lambda \in \mathbb{R}_{+}} \inf_{\bm{x} \in \mathcal{X}} \sup_{\bm{\xi} \in \Xi} \mathcal{L}(\bm{x}, \bm{\xi}, \lambda)
    \label{eq:strong_duality_general_two_stage}
\end{align}
These equations establish strong duality even if the second-stage feasible region $\mathcal Y$ may not be convex. Although not explicitly stated in the original paper, an
immediate consequence of these properties is the following equation to which we shall refer in this paper. This equation is obtained by simply taking the supremum of both sides of~\eqref{eq:strong_duality_general} over $\bm{\xi} \in \Xi$.
\begin{equation}
    \sup_{\bm{\xi} \in \Xi} \mathcal{Q}(\bm{x}, \bm{\xi}) = \sup_{\lambda \in \mathbb{R}_{+}} \sup_{\bm{\xi} \in \Xi} \mathcal{L}(\bm{x}, \bm{\xi}, \lambda)
    \; \text{ for all } \bm{x} \in \mathcal{X}.
    \label{eq:strong_duality_general_fixed_x}
\end{equation}

In \citet[Theorem~4]{subramanyam2022lagrangian}, it is shown that one can compute a closed-form expression for the optimal Lagrange multiplier; that is, one that maximizes the right-hand side of~\eqref{eq:strong_duality_general_fixed_x} if ``for every $\bm{x} \in \mathcal{X}$, either there exists $\bm{\xi} \in \Xi$ such that $\mathcal{Q}(\bm{x}, \bm{\xi}) = +\infty$ or $\mathcal{Q}(\bm{x}, \bm{\xi}) < +\infty$ for all $\bm{\xi} \in \{0, 1\}^{n_p}$''.
Whenever this condition holds, the paper claims that for any feasible first-stage decision $\bm{x} \in \mathcal{X}$;
that is,
for which $\sup \big\{\mathcal{Q}(\bm{x}, \bm{\xi}) : \bm{\xi} \in \Xi \big\} < +\infty$,
\begin{equation}\label{eq:false_claim}
    u(\bm{x}) - \ell(\bm{x})
    \in
    \mathop{\arg\max}_{\lambda \in \mathbb{R}_{+}} \left\{
    \max_{\bm{\xi} \in \Xi} \mathcal{L}(\bm{x}, \bm{\xi}, \lambda)
    \right\},
\end{equation}
where $u(\bm{x})$ is any finite upper bound on
$\sup \big\{\mathcal{Q}(\bm{x}, \bm{\xi}) : \bm{\xi} \in \Xi \big\}$
and  $\ell(\bm{x})$ is any finite lower bound on
$\inf \big\{\bm{c}(\bm{\xi})^\top \bm{x} + \bm{d}(\bm{\xi})^\top \bm{y} : \bm{\xi} \in \Xi, \bm{y} \in \mathcal{Y} \big\}$.
In other words, $u(\bm{x}) - \ell(\bm{x})$ is an optimal Lagrange multiplier.

\section{Counterexample}\label{sec:counterexample}

Consider the problem with $\mathcal X = \{ 0 \}$, $\Xi = \{ 0, 1 \}$, $\mathcal Y = \{0,1\}$, and whose second-stage value function is given by
\begin{equation*}
    \mathcal Q(x,\xi) = \left[
        \begin{array}{ll}
            \underset{y\in\mathcal Y}{\text{minimize}} \  & -y                   \\
            \text{subject to } \                          & y \le \dfrac32 - \xi
        \end{array}
        \right].
\end{equation*}
For simplicity, this problem has a unique feasible first-stage decision, so that the second-stage value function depends only on the uncertain parameter $\xi$. The counterexample can, however, be easily extended to more complex first-stage decision spaces.

It can be readily verified by enumeration that $\mathcal Q(x,\xi) < +\infty$ for all $x \in \mathcal X$ and all $\xi \in \Xi$. Hence, the conditions of \citet[Theorem~4]{subramanyam2022lagrangian} are satisfied. We show, however, that the claimed result is wrong. %
To that end, observe the following obtained by simply enumerating all points in $\Xi$ and $\mathcal{Y}$:
\begin{subequations}
    \begin{align*}
        \sup_{\xi\in\Xi} \mathcal Q(x,\xi)
         & = \max\left\{ \min_{y \in \{0, 1\}, y \le \frac32} -y, \;\; \min_{y\in \{0, 1\}, y \le \frac12} -y \right\}
        = \max\left\{ -1,  0 \right\}
        = 0,                                                                                                           \\
        \inf_{ y \in  \{0, 1\}} -y
         & = \min\{ 0, -1 \}
        = -1.
    \end{align*}
\end{subequations}
According to \citet[Theorem~4]{subramanyam2022lagrangian}, one can choose $u(x) = 0$ and $\ell(x) = -1$ to ensure that $u(x) - \ell(x) = 1$ is an optimal Lagrange multiplier, namely that $1 \in \mathop{\arg\max}_{\lambda \in \mathbb{R}_{+}} \left\{
    \max_{\xi \in \Xi} \mathcal{L}(x, \xi, \lambda)
    \right\}$.
We now proceed to show that this is false %
by explicitly calculating $\mathcal L(x,\xi,\lambda)$ by brute force enumeration. In deriving the following, we only use the fact that $\lambda \geq 0$.
\begin{align*}
    \max_{\xi\in\Xi} \mathcal L(x,\xi,\lambda) & =
    \max_{\xi \in \{0, 1\}} \left\{\min_{\substack{(y, z) \in \{0, 1\} \times [0, 1],                                                           \\ y + z \le \frac32}}-y + \lambda \left(z + \xi - 2z\xi \right) \right\} \\
                                               & = \max \left\{
    \min_{\substack{(y, z) \in \{0, 1\} \times [0, 1],                                                                                          \\ y + z \le \frac32}} -y + \lambda z ,
    \min_{\substack{(y, z) \in \{0, 1\} \times [0, 1],                                                                                          \\ y + z \le \frac32}} -y + \lambda - \lambda z
    \right\}                                                                                                                                    \\
                                               & = \max \Bigg\{
    \min\left\{\min_{z \in [0, 1], z \le \frac32} \lambda z, \min_{z \in [0, 1], z \le \frac12} -1 + \lambda z \right\},                        \\
                                               & \phantom{= \max \Bigg\}\;}
    \min\left\{\min_{z \in [0, 1], z \le \frac32} \lambda - \lambda z, \min_{z \in [0, 1], z \le \frac12} -1 + \lambda - \lambda z \right\}
    \Bigg\}                                                                                                                                     \\
                                               & =\max\left\{\min \left\{0, -1 \right\}, \min \left\{0,  \frac{\lambda}{2} - 1 \right\}\right\} \\
                                               & = \max\left\{-1, \min \left\{0, \frac{\lambda}{2} - 1 \right\} \right\}.
\end{align*}
A plot of the function is shown in Figure~\ref{fig:counterexample_function}, which indicates that the function is not maximized at $\lambda = u(x) - \ell(x) = 1$.
In other words, equation~\eqref{eq:false_claim} is false.

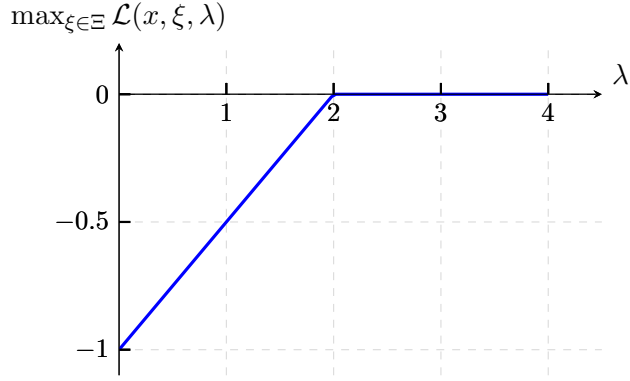
\begin{figure}
    \centering
    \begin{tikzpicture}
        \begin{axis}[
                width=8cm,                      %
                height=6cm,                      %
                xlabel={$\lambda$},              %
                ylabel={$\max_{\xi\in\Xi} \mathcal{L}(x,\xi,\lambda)$}, %
                axis lines=middle,               %
                tick align=inside,               %
                tick style={thick},              %
                xlabel style={font=\small},      %
                ylabel style={font=\small},      %
                tick label style={font=\small},  %
                grid=major,                      %
                grid style={dashed, gray!30},    %
                ymin=-1.1, ymax=0.2,             %
                xmin=0, xmax=4.5,                   %
                every axis x label/.style={at={(current axis.right of origin)},anchor=south west},
                every axis y label/.style={at={(current axis.above origin)},anchor=south},
                xtick distance=1,
                extra y ticks= {0},
            ]

            \addplot [
                domain=0:4,
                samples=100,
                very thick,
                blue
            ]
            { max( -1, min(0, 0.5*x - 1) ) };

        \end{axis}
    \end{tikzpicture}
    \caption{Plot of $\max_{\xi\in\Xi} \mathcal{L}(x,\xi,\lambda)$ versus $\lambda$ for the counterexample. \label{fig:counterexample_function}}
\end{figure}

\begin{remark}
		The counterexample can be extended to show that the optimality gap can be arbitrarily large. Suppose that we scale the objective function of the counterexample by some fixed $\gamma > 0$. Then, it can be readily verified that the second-stage value function becomes $\gamma \mathcal Q(x,\xi)$ and that \citet[Theorem~4]{subramanyam2022lagrangian} prescribes $\gamma \cdot (u(x) - \ell(x)) = \gamma$ to be an optimal multiplier for the scaled problem. However, $\max_{\xi\in\Xi} \mathcal L(x,\xi,\gamma)$ is equal to $-\gamma/2$, whereas $\max_{\xi\in\Xi} \gamma\mathcal Q(x,\xi)$ is equal to $0$. In other words, one obtains an additive optimality gap of $\gamma / 2$, independent of the choice of~$\gamma$. This shows that the Lagrangian dual, when evaluated at the originally claimed optimal value of $\lambda$, can lead to arbitrarily poor relaxations of the second-stage value function. 
    \end{remark}

\section{Correct Sufficient Conditions}\label{sec:corrected}

The proof presented in~\citet[Theorem~4]{subramanyam2022lagrangian} fails when it is claimed that ``problem~\eqref{eq:lagrangian_general} always has an optimal solution $(\hat{\bm{y}}, \hat{\bm{z}})$ such that $\hat{\bm{z}} \in \{0, 1\}^{n_p}$, for any $\bm{\xi} \in \Xi$ and $\lambda \in \mathbb{R}_{+}$''.
It is worth noting, however, that all other steps of the proof remain valid whenever this claim is true. %
Below, we provide complete and correct conditions under which this claim is true and hence, the conclusion of~\citet[Theorem~4]{subramanyam2022lagrangian} remains valid.
Additionally, we highlight that although subsequent results in the original paper (e.g., Theorem~5, Algorithm~3,  Algorithm~8) invoke \citet[Theorem~4]{subramanyam2022lagrangian}, they do not require any modifications themselves. Indeed, their validity is unaffected as long as the correct sufficient conditions are satisifed.

For simplicity of our ensuing exposition, we disaggregate the second-stage decisions in~\ref{eq:two_stage_ro_general} into their continuous and discrete components.
Let $\bm y = (\bm y_\mathrm{c}, \bm y_\mathrm{d})$, where $\bm y_\mathrm{c}$ and $\bm y_\mathrm{d}$ are the vector of continuous and discrete second-stage variables, respectively. We extend this notation to matrices and vectors which are multiplied by $\bm y$ (for example, $\bm W\bm y = \bm W_\mathrm{c}\bm y_\mathrm{c} + \bm W_\mathrm{d}\bm y_\mathrm{d}$), so that $\mathcal{Q}(\bm{x}, \bm{\xi})$ can also be equivalently written as:
\begin{equation*}%
    \begin{aligned}
        \mathcal{Q}(\bm{x}, \bm{\xi}) =
        \left[
            \begin{aligned}
                \mathop{\text{minimize}}_{\bm{y}} \;\; & \bm{c}(\bm{\xi})^\top \bm{x}  + \bm{d}_\mathrm{c}(\bm{\xi})^\top \bm{y}_\mathrm{c}  + \bm{d}_\mathrm{d}(\bm{\xi})^\top \bm{y}_\mathrm{d} \\
                \text{subject to} \;                   & \bm{T}\bm{x} + \bm{W}_\mathrm{c} \bm{y}_\mathrm{c} + \bm{W}_\mathrm{d} \bm{y}_\mathrm{d} \geq \bm{h}(\bm{\xi})                           \\
                                                       & \bm{y} = (\bm{y}_\mathrm{c}, \bm{y}_\mathrm{d}) \in \mathcal Y \coloneqq \mathbb{R}^{nc_2}_{+} \times \mathcal{Y}_\mathrm{d}
            \end{aligned}
            \right],
    \end{aligned}
\end{equation*}
where $\mathcal{Y}_\mathrm{d} \subseteq \mathbb Z^{nd_2}$, $nc_2, nd_2 \in \mathbb Z \cap [0, n_2]$ and $nc_2 + nd_2 = n_2$.
This representation is general enough to allow the second-stage decisions to be purely continuous ($nc_2 = n_2$), purely integer ($nd_2 = n_2$) or mixed-integer.

We also define $\mathcal Q(\bm x, \bm \xi ; \bm y_\mathrm{d})$ and $\mathcal L(\bm x, \bm \xi, \lambda; \bm y_\mathrm{d})$ as restrictions of $\mathcal Q(\bm x, \bm \xi)$ and $\mathcal L(\bm x, \bm \xi, \lambda)$, respectively, where the values of the discrete second-stage decisions are fixed to $\bm y_\mathrm{d}$, as shown below, where $\bm y_\mathrm{d}$ is omitted below the minimize sign.
\begin{align} \label{eq:Q_fixed_yd}
    \mathcal{Q}(\bm{x}, \bm{\xi}; \bm y_\mathrm{d})
     & =
    \left[
        \begin{aligned}
            \mathop{\text{minimize}}_{\bm{y}_\mathrm{c} \in \mathbb{R}^{nc_2}_{+}}\;\; &
            \bm{c}(\bm{\xi})^\top \bm{x}  + \bm{d}_\mathrm{c}(\bm{\xi})^\top \bm{y}_\mathrm{c} + \bm{d}_\mathrm{d} (\bm{\xi})^\top \bm y_\mathrm{d} \\
            \text{subject to} \;\;                                                     &
            \bm{T}\bm{x} + \bm{W}_\mathrm{c} \bm{y}_\mathrm{c} + \bm W_\mathrm{d} \bm y_\mathrm{d} \geq \bm{h}(\bm{\xi})
        \end{aligned}
    \right], \\ \label{eq:L_fixed_yd}
    \mathcal{L}(\bm{x}, \bm{\xi}, \lambda; \bm y_\mathrm{d})
     & =
    \left[
        \begin{aligned}
            \mathop{\text{minimize}}_{\bm{y}_\mathrm{c} \in \mathbb{R}^{nc_2}_{+},  \bm{z} \in \mathbb{R}^{n_p}_{+}}\;\; &
            \bm{c}(\bm{\xi})^\top \bm{x}  + \bm{d}_\mathrm{c}(\bm{\xi})^\top \bm{y}_\mathrm{c} + \bm{d}_\mathrm{d}(\bm\xi)^\top\bm y_\mathrm{d} + \lambda \phi(\bm{z}, \bm{\xi}) \\
            \text{subject to} \;\;\;\;\;                                                                                 &
            \bm{T}\bm{x} + \bm{W}_\mathrm{c} \bm{y}_\mathrm{c} + \bm W_\mathrm{d}\bm y_\mathrm{d} \geq \bm{h}(\bm{z}), \;\; \bm{z} \leq \one.
        \end{aligned}
        \right].
\end{align}

The next theorem provides a correct set of sufficient conditions under which the original closed-form expression, namely $u(\bm x) - \ell(\bm x)$, remains an optimal value of the Lagrange multiplier.
\begin{theorem}%
    \label{theorem:optimal_multiplier_corrected}
    Suppose that the following conditions are satisfied in problem~\ref{eq:two_stage_ro_general}.
    \begin{enumerate}
        \item $\mathcal{X} \subseteq \mathbb{Z}^{n_1}$.
        \item $\bm{T} \in \mathbb{Z}^{m \times n_1}$, %
              $\bm{W} \in \mathbb{Z}^{m \times {n}_2}$, $\bm{h}(\bm{\xi}) = \bm{h}_0 + \bm{H} \bm{\xi}$, where $\bm{h}_0 \in \mathbb{Z}^m$, $\bm{H} \in \mathbb{Z}^{m \times n_p}$.
        \item $[\bm W_\mathrm{c}\; -\bm H] \in \mathbb{Z}^{m \times (nc_2 + n_p)}$ is a totally unimodular matrix.
    \end{enumerate}
    Then, for any feasible first-stage decision $\bm{x} \in \mathcal{X}$;
    that is,
    for which $\sup \big\{\mathcal{Q}(\bm{x}, \bm{\xi}) : \bm{\xi} \in \Xi \big\} < +\infty$,
    we have that
    \begin{equation*}
        u(\bm{x}) - \ell(\bm{x})
        \in
        \mathop{\arg\max}_{\lambda \in \mathbb{R}_{+}} \left\{
        \max_{\bm{\xi} \in \Xi} \mathcal{L}(\bm{x}, \bm{\xi}, \lambda)
        \right\},
    \end{equation*}
    where $u(\bm{x})$ is any finite upper bound on
    $\sup \big\{\mathcal{Q}(\bm{x}, \bm{\xi}) : \bm{\xi} \in \Xi \big\}$
    and  $\ell(\bm{x})$ is any finite lower bound on
    $\inf \big\{\bm{c}(\bm{\xi})^\top \bm{x} + \bm{d}(\bm{\xi})^\top \bm{y} : \bm{\xi} \in \Xi, \bm{y} \in \mathcal{Y} \big\}$.
\end{theorem}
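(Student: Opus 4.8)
The plan is to repair the one defective step in the original argument---the unjustified assertion that problem~\eqref{eq:lagrangian_general} always admits an optimal solution $(\hat{\bm y}, \hat{\bm z})$ with $\hat{\bm z} \in \{0,1\}^{n_p}$---since, as already noted in the discussion preceding the theorem, every remaining step of the proof of \citet[Theorem~4]{subramanyam2022lagrangian} survives once this integrality is guaranteed. I would therefore isolate the integrality of $\bm z$ as a lemma, establish it from Conditions~1--3 via total unimodularity, and then replay the (correct) worst-case argument to close the theorem.

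First I would fix $\bm x \in \mathcal X$, $\bm\xi \in \Xi$, $\lambda \ge 0$, and a discrete assignment $\bm y_\mathrm{d} \in \mathcal Y_\mathrm{d}$, and inspect the restriction $\mathcal L(\bm x, \bm\xi, \lambda; \bm y_\mathrm{d})$ in~\eqref{eq:L_fixed_yd}. Because $\bm\xi$ is fixed, $\phi(\bm z, \bm\xi) = \one^\top\bm\xi + (\one - 2\bm\xi)^\top\bm z$ is affine in $\bm z$, so this restriction is a linear program in $(\bm y_\mathrm{c}, \bm z)$ whose non-bound constraints read $\bm W_\mathrm{c}\bm y_\mathrm{c} - \bm H\bm z \ge \bm h_0 - \bm T\bm x - \bm W_\mathrm{d}\bm y_\mathrm{d}$, supplemented by $\bm y_\mathrm{c} \ge \bm 0$ and $\bm 0 \le \bm z \le \one$. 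Two facts then drive the lemma: by Conditions~1 and~2 and $\bm y_\mathrm{d} \in \mathbb Z^{nd_2}$ the right-hand side $\bm h_0 - \bm T\bm x - \bm W_\mathrm{d}\bm y_\mathrm{d}$ is integral (as are the bound right-hand sides $\bm 0, \one$); and by Condition~3 the coefficient matrix $[\bm W_\mathrm{c}\ {-\bm H}]$ is totally unimodular, a property preserved when the signed unit-vector rows encoding the bounds are adjoined. Hence the feasible polyhedron is integral and pointed (the box on $\bm z$ and nonnegativity of $\bm y_\mathrm{c}$ preclude any line), so whenever this LP has a finite optimum it is attained at an integral vertex, where $\bm z \in \{0,1\}^{n_p}$. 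Minimizing over $\bm y_\mathrm{d}$, I would conclude the identity $\mathcal L(\bm x, \bm\xi, \lambda) = \min_{\bm z \in \{0,1\}^{n_p}}\{R(\bm x,\bm\xi,\bm z) + \lambda\,\phi(\bm z,\bm\xi)\}$, where $R(\bm x,\bm\xi,\bm z) \coloneqq \min\{\bm c(\bm\xi)^\top\bm x + \bm d(\bm\xi)^\top\bm y : \bm y \in \mathcal Y,\ \bm T\bm x + \bm W\bm y \ge \bm h(\bm z)\}$; this identity holds even in the extended reals, since integrality transfers feasibility to binary $\bm z$ and, $\bm z$ being bounded, any recession direction lies in $\bm y_\mathrm{c}$ and is inherited by the binary restriction.

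With this reduction, closing the theorem is routine, and I would proceed as follows. For binary $\bm z$ the penalty is the Hamming distance $\phi(\bm z,\bm\xi) = \|\bm z - \bm\xi\|_1$, so $\phi(\bm z, \bm\xi) \ge 1$ whenever $\bm z \neq \bm\xi$; moreover $R(\bm x,\bm\xi,\bm\xi) = \mathcal Q(\bm x, \bm\xi)$, while $R(\bm x, \bm\xi, \bm z) \ge \ell(\bm x)$ for every $\bm z$, because minimizing the cost indexed by $\bm\xi \in \Xi$ over a subset of $\mathcal Y$ cannot drop below the global lower bound $\ell(\bm x)$. Choosing a worst-case realization $\bm\xi^\star \in \mathop{\arg\max}_{\bm\xi \in \Xi} \mathcal Q(\bm x, \bm\xi)$ and setting $\lambda = u(\bm x) - \ell(\bm x)$, the candidate $\bm z = \bm\xi^\star$ yields value $\mathcal Q(\bm x, \bm\xi^\star)$, whereas every $\bm z \neq \bm\xi^\star$ yields at least $\ell(\bm x) + \lambda = u(\bm x) \ge \mathcal Q(\bm x, \bm\xi^\star)$; hence the reduction gives $\mathcal L(\bm x, \bm\xi^\star, u(\bm x) - \ell(\bm x)) = \mathcal Q(\bm x, \bm\xi^\star) = \sup_{\bm\xi \in \Xi}\mathcal Q(\bm x, \bm\xi)$. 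Since~\eqref{eq:strong_duality_general_fixed_x} already gives the reverse bound $\max_{\bm\xi\in\Xi}\mathcal L(\bm x,\bm\xi,\lambda) \le \sup_{\bm\xi\in\Xi}\mathcal Q(\bm x,\bm\xi)$ for every $\lambda \ge 0$, the multiplier $u(\bm x) - \ell(\bm x)$ attains the maximum of $\max_{\bm\xi\in\Xi}\mathcal L(\bm x,\bm\xi,\cdot)$, which is exactly the assertion.

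I expect the entire difficulty to sit in the integrality lemma. The delicate points are verifying that total unimodularity of $[\bm W_\mathrm{c}\ {-\bm H}]$ persists after the box and nonnegativity rows are appended, and handling attainment cleanly (integral vertices in the finite case, and the transfer of infeasibility and unboundedness to the binary restriction in the extended-real identity) so that the reduction is never vacuous. It is also worth checking the lemma against the counterexample, where it is precisely the non-integral $\bm h_0 = -3/2$ (a violation of Condition~2) that makes the surrogate right-hand side $3/2 - y$ fractional and produces the fractional optimizer $z = 1/2$ breaking the original claim; this confirms that Condition~2, and not merely the total unimodularity of Condition~3, is indispensable.
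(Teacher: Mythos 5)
Your proposal is correct and follows essentially the same route as the paper: fix $\bm{y}_\mathrm{d}$, observe that the resulting LP restriction $\mathcal{L}(\bm{x},\bm{\xi},\lambda;\bm{y}_\mathrm{d})$ has a totally unimodular constraint matrix (Condition~3, preserved under adjoining the bound rows) and integral right-hand side $\bm{h}_0-\bm{T}\bm{x}-\bm{W}_\mathrm{d}\bm{y}_\mathrm{d}$ (Conditions~1--2), so an optimal solution with $\hat{\bm{z}}\in\{0,1\}^{n_p}$ exists, which is exactly the defective step of the original proof. The only difference is that the paper then simply invokes the remaining (valid) steps of the original proof of \citet[Theorem~4]{subramanyam2022lagrangian}, whereas you reconstruct them explicitly via the Hamming-distance argument with $u(\bm{x})$ and $\ell(\bm{x})$; your reconstruction, including the care taken with attainment and unboundedness, is accurate.
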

\begin{proof}
    Suppose that $\bm{x} \in \mathcal{X}$ is any feasible first-stage decision in \ref{eq:two_stage_ro_general}.
    We shall show that under the conditions stated in the Theorem, problem~\eqref{eq:lagrangian_general} always has an optimal solution $(\hat{\bm{y}}, \hat{\bm{z}})$ such that $\hat{\bm{z}} \in \{0, 1\}^{n_p}$, for any $\bm{\xi} \in \Xi$ and $\lambda \in \mathbb{R}_{+}$. The rest of the argument follows from the proof of~\citet[Theorem~4]{subramanyam2022lagrangian} and remains unchanged.

    Using the definition of $\mathcal{L}(\bm{x}, \bm{\xi}, \lambda ; \bm{y}_\mathrm{d})$ in equation~\eqref{eq:L_fixed_yd},  problem~\eqref{eq:lagrangian_general} can be equivalently written as the following nested optimization problem:
    \begin{equation*}
        \mathcal{L}(\bm{x}, \bm{\xi}, \lambda)
        =\mathop{\text{minimize}}_{\bm{y}_\mathrm{d} \in \mathcal{Y}_\mathrm{d}} \mathcal{L}(\bm{x}, \bm{\xi}, \lambda ; \bm{y}_\mathrm{d})
    \end{equation*}
    Under the stated conditions, it can be readily verified the constraint matrix %
    defining the feasible region of $\mathcal{L}(\bm{x}, \bm{\xi}, \lambda ; \bm{y}_\mathrm{d})$ %
    is totally unimodular.
    Moreover, the right-hand side coefficients,
    $\bm{h}_0 - \bm{T}\bm{x} - \bm{W}_\mathrm{d} \bm{y}_\mathrm{d}$,
    are integer-valued for any $\bm{x} \in \mathcal{X} \subseteq \mathbb{Z}^{n_1}$ and $\bm{y}_\mathrm{d} \in \mathcal{Y}_\mathrm{d} \subseteq \mathbb{Z}^{nd_2}$.
    Therefore, the polyhedron defining the feasible region of $\mathcal{L}(\bm{x}, \bm{\xi}, \lambda ; \bm{y}_\mathrm{d})$ has integer vertices,
    as does its optimal solution.
    Hence, any optimal solution $(\hat{\bm{y}}, \hat{\bm{z}})$ of $\mathcal{L}(\bm{x}, \bm{\xi}, \lambda)$ must satisfy $\hat{\bm{z}} \in \{0, 1\}^{n_p}$.%
\end{proof}

The conditions in Theorem~\ref{theorem:optimal_multiplier_corrected} are much stronger than those presented in the original paper.
	This is not surprising given the simplicity of the counterexample presented in Section~\ref{sec:counterexample}, which is a pure integer problem with a single variable and a single constraint. %
    In Section~\ref{sec:complexity}, we show that a much more general class of problems admits (polynomial time computable) closed-form expressions for an optimal Lagrange multiplier. However, this result is mostly of theoretical value, since it does not offer a practically computable expression for the multiplier.
    Therefore, in Section~\ref{sec:algo_modifications}, we %
    present practical algorithmic schemes to compute an optimal multiplier that remain valid and rigorous even in problem instances that may not satisfy any of the sufficient conditions presented either in this section or those in Section~\ref{sec:complexity}.

Although the conditions in Theorem~\ref{theorem:optimal_multiplier_corrected} may appear restrictive, there is a broad family of problems where the conditions are naturally satisfied.
\begin{example}[Interdicted Combinatorial Problems]
    Suppose that the second-stage problem is combinatorial, $\mathcal Y= \{0,1\}^{n_2}$, and the second-stage decisions represent resources that are being interdicted depending on some random realization of the uncertain parameters, as shown below.
    Such structures are common in network interdiction problems; e.g., see~\citet{Lefebvre2024,Lefebvre2024b}.
    \begin{equation*}
        \mathcal{Q}(\bm{x}, \bm{\xi}) =
        \left[
            \begin{aligned}
                \mathop{\text{minimize}}_{\bm{y}\in \mathcal Y} \;\; & \bm{c}(\bm{\xi})^\top \bm{x}  + \bm{d}(\bm{\xi})^\top \bm{y} \\
                \text{subject to} \;                                 & \bm{T}\bm{x} + \bm{W} \bm{y} \geq \bm{h},                    \\
                                                                     & \bm 0 \le \bm y \le \bm e - \bm \xi
            \end{aligned}
            \right].
    \end{equation*}
    Suppose also that $\mathcal X\subseteq\mathbb Z^{n_1}$ and that all matrices are integer. Then, it can be verified that the assumptions of Theorem~\ref{theorem:optimal_multiplier_corrected} are satisfied (with $nc_2 = 0$). One can then compute a closed-form expression for the optimal multiplier using expressions for $u(\bm x)$ and $\ell(\bm x)$ provided in \citet[Theorem~5]{subramanyam2022lagrangian}.
\end{example}

Our proof argument allows us to also extend Theorem~\ref{theorem:optimal_multiplier_corrected} to problem~\ref{eq:two_stage_ro_indicator} (reproduced below from the original paper) without requiring significant changes.
\begin{equation}\label{eq:two_stage_ro_indicator}\tag{$\mathcal{P}_\mathcal{I}$}
    \begin{aligned}
         & \inf_{\bm{x} \in \mathcal{X}} \sup_{\bm{\xi} \in \Xi} \, \mathcal{Q}_\mathcal{I}(\bm{x}, \bm{\xi}), \\
         &                                                                                                     %
        \mathcal{Q}_\mathcal{I}(\bm{x}, \bm{\xi}) =
        \left[\begin{aligned}
                      \mathop{\text{minimize}}_{\bm{y} \in \mathcal{Y}} \;\; & \bm{c}(\bm{\xi})^\top \bm{x}  + \bm{d}(\bm{\xi})^\top \bm{y}                           \\
                      \text{subject to} \;                                   & \bm{g}(\bm{x}, \bm{y}) \geq \bm{0}                                                     \\
                                                                             & \xi_j = 0 \implies g_i(\bm{x}, \bm{y}) = 0, \;\; i \in \mathcal{I}_j^0, \; j \in [n_p] \\
                                                                             & \xi_j = 1 \implies g_i(\bm{x}, \bm{y}) = 0, \;\; i \in \mathcal{I}_j^1, \; j \in [n_p]
                  \end{aligned}\right].
    \end{aligned}
\end{equation}
Here, %
$\bm g(\bm x, \bm y) = \bm T \bm x + \bm W \bm y - \bm h_0$ is some vector-valued affine function and $\mathcal{I}_j^0, \mathcal{I}_j^1 \subseteq [m]$ are some index sets.
The indicator formulation \ref{eq:two_stage_ro_indicator} arises in applications where the uncertain presence or absence of a quantity switches on or off second-stage constraints. Its corresponding Lagrangian is given by:
\begin{align}
    \mathcal{L}_\mathcal{I}(\bm{x}, \bm{\xi}, \lambda)
     & =\left[
        \begin{aligned}
            \mathop{\text{minimize}}_{\bm{y} \in \mathcal{Y}} \;\; &
            \bm{c}(\bm{\xi})^\top \bm{x}  + \bm{d}(\bm{\xi})^\top \bm{y} + \lambda \phi_\mathcal{I}(\bm{x}, \bm{y}, \bm{\xi}) \\
            \text{subject to} \;\;                                 & \bm{g}(\bm{x}, \bm{y}) \geq \bm{0}.
        \end{aligned}
        \right]
    \label{eq:lagrangian_indicator}                                                                                                                                  \\
    \phi_\mathcal{I}(\bm{x}, \bm{y}, \bm{\xi})
     & = \sum_{j \in [n_p]} \sum_{i \in \mathcal{I}_j^1} \xi_j g_i(\bm{x}, \bm{y}) + \sum_{j \in [n_p]}\sum_{i \in \mathcal{I}_j^0} (1 - \xi_j) g_i(\bm{x}, \bm{y}),
    \label{eq:penalty_indicator}
\end{align}
where, as before, $\phi_\mathcal{I}$ is a penalty function that serves to move all uncertain parameters from the constraints to the objective function.

We now present the following analog of Theorem~\ref{theorem:optimal_multiplier_corrected} for problem~\ref{eq:two_stage_ro_indicator}.
Notably, we highlight that a similar result was not postulated in the original paper.
The proof is similar to that of Theorem~\ref{theorem:optimal_multiplier_corrected} and we omit it for the sake of brevity.
\begin{theorem}\label{theorem:optimal_multiplier_indicator}
    Suppose that the following conditions are satisfied in problem~\ref{eq:two_stage_ro_indicator}.
    \begin{enumerate}
        \item $\mathcal{X} \subseteq \mathbb{Z}^{n_1}$.
        \item $\bm{T} \in \mathbb{Z}^{m \times n_1}$, %
              $\bm{W} \in \mathbb{Z}^{m \times {n}_2}$,  $\bm{h}_0 \in \mathbb{Z}^m$.
        \item $\bm W_\mathrm{c} \in \mathbb{Z}^{m \times nc_2}$ is a totally unimodular matrix.
    \end{enumerate}
    Then, for any feasible first-stage decision $\bm{x} \in \mathcal{X}$;
    that is,
    for which $\sup \big\{\mathcal{Q}_\mathcal{I}(\bm{x}, \bm{\xi}) : \bm{\xi} \in \Xi \big\} < +\infty$,
    we have that
    \begin{equation*}
        u(\bm{x}) - \ell(\bm{x})
        \in
        \mathop{\arg\max}_{\lambda \in \mathbb{R}_{+}} \left\{
        \max_{\bm{\xi} \in \Xi} \mathcal{L}_\mathcal{I}(\bm{x}, \bm{\xi}, \lambda)
        \right\},
    \end{equation*}
    where $u(\bm{x})$ is any finite upper bound on
    $\sup \big\{\mathcal{Q}_\mathcal{I}(\bm{x}, \bm{\xi}) : \bm{\xi} \in \Xi \big\}$
    and  $\ell(\bm{x})$ is any finite lower bound on
    $\inf \big\{\bm{c}(\bm{\xi})^\top \bm{x} + \bm{d}(\bm{\xi})^\top \bm{y} : \bm{\xi} \in \Xi, \bm{y} \in \mathcal{Y} \big\}$.
\end{theorem}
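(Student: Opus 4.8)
The plan is to mirror the proof of Theorem~\ref{theorem:optimal_multiplier_corrected}: reduce the whole statement to a single integrality claim for~\eqref{eq:lagrangian_indicator}, and then exploit the resulting ``quantization'' of the penalty $\phi_\mathcal{I}$ to certify that $\lambda^\star \coloneqq u(\bm x) - \ell(\bm x)$ attains the outer maximum. I would fix a feasible $\bm x \in \mathcal{X}$, an arbitrary $\bm\xi \in \Xi$, and $\lambda \ge 0$. The claim to establish is that \eqref{eq:lagrangian_indicator} always admits an optimal solution $\hat{\bm y}$ at which every component $g_i(\bm x, \hat{\bm y})$ is integer-valued; this is the exact analog of the $\hat{\bm z}\in\{0,1\}^{n_p}$ claim, adapted to the fact that \ref{eq:two_stage_ro_indicator} carries no auxiliary variable $\bm z$ and instead pushes the uncertainty into the objective penalty.

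To prove the integrality claim, I would partition $\bm y = (\bm y_\mathrm{c}, \bm y_\mathrm{d})$ and write $\mathcal{L}_\mathcal{I}(\bm x, \bm\xi, \lambda) = \min_{\bm y_\mathrm{d} \in \mathcal{Y}_\mathrm{d}} \mathcal{L}_\mathcal{I}(\bm x, \bm\xi, \lambda; \bm y_\mathrm{d})$, where the restriction fixing the discrete variables is defined analogously to~\eqref{eq:L_fixed_yd}. Because each $g_i$ is affine, the penalty $\phi_\mathcal{I}$ enters only the objective, so for fixed $\bm y_\mathrm{d}$ the restriction is a linear program in $\bm y_\mathrm{c} \in \mathbb{R}^{nc_2}_{+}$ over the polyhedron $\{\bm y_\mathrm{c} \ge \bm 0 : \bm W_\mathrm{c}\bm y_\mathrm{c} \ge \bm h_0 - \bm T\bm x - \bm W_\mathrm{d}\bm y_\mathrm{d}\}$. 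Under condition~3 the constraint matrix $\bm W_\mathrm{c}$ is totally unimodular (appending the nonnegativity rows preserves this), and under conditions~1--2 the right-hand side is integer-valued; hence this polyhedron has integer vertices and an optimal $\hat{\bm y}_\mathrm{c}$ can be taken integral. Then $\hat{\bm y} = (\hat{\bm y}_\mathrm{c}, \bm y_\mathrm{d})$ is integral, and since $\bm T, \bm W, \bm h_0$ are integral, every $g_i(\bm x, \hat{\bm y})$ is an integer, as claimed.

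With the claim in hand, I would quantize the penalty. At any feasible point $g_i(\bm x, \bm y) \ge 0$, and $\bm\xi \in \{0,1\}^{n_p}$, so $\phi_\mathcal{I}(\bm x, \hat{\bm y}, \bm\xi)$ is a sum of nonnegative integers: it is either $0$ or at least $1$. If it is $0$, then $\hat{\bm y}$ satisfies every indicator constraint and is feasible for $\mathcal{Q}_\mathcal{I}(\bm x, \bm\xi)$, whence $\mathcal{L}_\mathcal{I}(\bm x, \bm\xi, \lambda^\star) = \bm c(\bm\xi)^\top\bm x + \bm d(\bm\xi)^\top\hat{\bm y} \ge \mathcal{Q}_\mathcal{I}(\bm x, \bm\xi)$; if it is at least $1$, then the objective is bounded below by $\ell(\bm x) + \lambda^\star = u(\bm x) \ge \sup_{\bm\xi'\in\Xi}\mathcal{Q}_\mathcal{I}(\bm x, \bm\xi') \ge \mathcal{Q}_\mathcal{I}(\bm x, \bm\xi)$. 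Combining either case with the weak-duality inequality $\mathcal{L}_\mathcal{I}(\bm x, \bm\xi, \lambda) \le \mathcal{Q}_\mathcal{I}(\bm x, \bm\xi)$, valid for all $\lambda \ge 0$ because any $\mathcal{Q}_\mathcal{I}$-feasible $\bm y$ incurs zero penalty, yields $\mathcal{L}_\mathcal{I}(\bm x, \bm\xi, \lambda^\star) = \mathcal{Q}_\mathcal{I}(\bm x, \bm\xi)$ whenever the latter is finite. Evaluating at a maximizer $\bm\xi^\star \in \mathop{\arg\max}_{\bm\xi\in\Xi}\mathcal{Q}_\mathcal{I}(\bm x, \bm\xi)$ gives
\[
    \sup_{\bm\xi\in\Xi}\mathcal{Q}_\mathcal{I}(\bm x, \bm\xi)
    \ \ge\ \max_{\bm\xi\in\Xi}\mathcal{L}_\mathcal{I}(\bm x, \bm\xi, \lambda^\star)
    \ \ge\ \mathcal{L}_\mathcal{I}(\bm x, \bm\xi^\star, \lambda^\star)
    \ =\ \mathcal{Q}_\mathcal{I}(\bm x, \bm\xi^\star)
    \ =\ \sup_{\bm\xi\in\Xi}\mathcal{Q}_\mathcal{I}(\bm x, \bm\xi),
\]
so every inequality is tight. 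Since the first inequality (weak duality) holds for every $\lambda \ge 0$ and the bound is attained at $\lambda^\star$, we conclude $\lambda^\star = u(\bm x) - \ell(\bm x) \in \mathop{\arg\max}_{\lambda\in\mathbb{R}_{+}}\max_{\bm\xi\in\Xi}\mathcal{L}_\mathcal{I}(\bm x, \bm\xi, \lambda)$.

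The main obstacle is the integrality claim, and the subtlety is locating the correct unimodular system: unlike in~\ref{eq:two_stage_ro_general}, there is no variable $\bm z$ and the right-hand side $\bm h_0$ is constant, so the relevant matrix is $\bm W_\mathrm{c}$ alone (condition~3) rather than $[\bm W_\mathrm{c}\ -\bm H]$, and one must check that the penalty terms $g_i(\bm x, \hat{\bm y})$ inherit integrality from $\hat{\bm y}$ being a vertex of the restricted LP. Once this is secured, the penalty-quantization argument is a routine transcription of the one underlying Theorem~\ref{theorem:optimal_multiplier_corrected}.
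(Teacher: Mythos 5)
Your proof is correct and follows essentially the same route the paper intends: the paper omits this proof, stating only that it mirrors Theorem~\ref{theorem:optimal_multiplier_corrected}, and your argument is precisely that mirroring — fix $\bm y_\mathrm{d}$, use total unimodularity of $\bm W_\mathrm{c}$ with the integer right-hand side $\bm h_0 - \bm T\bm x - \bm W_\mathrm{d}\bm y_\mathrm{d}$ to get an integral optimal $\hat{\bm y}_\mathrm{c}$, hence integer values $g_i(\bm x,\hat{\bm y})$, and then run the standard penalty-quantization argument ($\phi_\mathcal{I}=0$ gives feasibility, $\phi_\mathcal{I}\ge 1$ gives $\mathcal{L}_\mathcal{I}\ge \ell(\bm x)+\lambda^\star = u(\bm x)$) together with weak duality. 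You also correctly identified the one genuinely new point in adapting the argument, namely that the relevant totally unimodular matrix is $\bm W_\mathrm{c}$ alone rather than $[\bm W_\mathrm{c}\; -\bm H]$, since the uncertainty enters only through the objective penalty.
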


\section{Absence of Sufficient Conditions}\label{sec:absence}

We now discuss key implications in the absence of sufficient conditions under which $u(\bm x) - \ell(\bm x)$ is an optimal value of the Lagrange multiplier.
Although the original paper includes a short discussion to that end, we present a more thorough and formal treatment in this section.
First, in Section~\ref{sec:complexity}, we show that computing an optimal multiplier can still be done in closed form and in time that is polynomial in the size of the input data under fairly general conditions. However, this positive complexity result is mostly of theoretical value, since the multiplier value is too large to be useful in practice.
Therefore, in Section~\ref{sec:algo_modifications}, we present exact algorithmic solutions that are broadly applicable to any two-stage robust optimization problems of the form \ref{eq:two_stage_ro_general} or \ref{eq:two_stage_ro_indicator}, irrespective of whether they satisfy the conditions that allow closed-form expressions of the Lagrange multiplier.

\subsection{Computational Complexity}
\label{sec:complexity}

To ease our presentation, we first paraphrase a result
from~\citet[Lemma~4]{buchheim2023bilevel} regarding solutions of linear
programs.

\begin{lemma}[\citet{buchheim2023bilevel}]
    \label{lemma:buchheim}
    Let $\lVert \bm X \rVert$ denote the maximum absolute value of any entry of a matrix or vector $\bm X$.
    Let
    $P = \{ \bm\omega \in \mathbb{R}_+^{n_\omega} : \bm A \bm \omega = \bm b  \}$ be a polyhedron with $\bm A \in \mathbb Z^{m_\omega \times n_\omega}$ and $\bm b \in\mathbb Z^{m_\omega}$. Then, any vertex $\bar{\bm \omega}$ of $P$ satisfies
    $\lVert \bar{\bm \omega} \rVert \le m_\omega ! \lVert \bm b \rVert \lVert \bm A \rVert^{m_\omega - 1} $.
\end{lemma}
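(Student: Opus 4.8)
The plan is to exploit the fact that a vertex of a polyhedron in standard form is a basic feasible solution, and then bound each coordinate of that solution via Cramer's rule. First I would recall the structure of vertices: since $P$ is written in the form $\{\bm\omega \geq \bm 0 : \bm A\bm\omega = \bm b\}$, any vertex $\bar{\bm\omega}$ is a basic feasible solution. That is, there is an index set $B$ of linearly independent columns of $\bm A$ such that the nonzero entries of $\bar{\bm\omega}$ are supported on $B$ and satisfy the square nonsingular system $\bm A_B \bar{\bm\omega}_B = \bm b$. Without loss of generality (by discarding redundant equality constraints) I may assume $\bm A$ has full row rank, so that $\bm A_B$ is an $m_\omega \times m_\omega$ nonsingular integer matrix; the rank-deficient case only makes the bound easier, since it involves a strictly smaller square system and $\lVert \bm A \rVert^{r-1} \leq \lVert \bm A \rVert^{m_\omega - 1}$ whenever $\lVert \bm A \rVert \geq 1$.

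Next I would apply Cramer's rule to $\bm A_B \bar{\bm\omega}_B = \bm b$, giving, for each basic index $j$,
\[
    \bar\omega_j = \frac{\det(\bm A_B^{(j)})}{\det(\bm A_B)},
\]
where $\bm A_B^{(j)}$ denotes $\bm A_B$ with its $j$-th column replaced by $\bm b$. The denominator is what eliminates the division: because $\bm A_B$ is integer-valued and nonsingular, $\det(\bm A_B)$ is a nonzero integer, hence $\lvert \det(\bm A_B) \rvert \geq 1$. Consequently $\lvert \bar\omega_j \rvert \leq \lvert \det(\bm A_B^{(j)}) \rvert$, and it remains only to bound this single determinant.

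The heart of the argument, and the step I expect to require the most care, is bounding $\lvert \det(\bm A_B^{(j)}) \rvert$ by $m_\omega! \lVert \bm b \rVert \lVert \bm A \rVert^{m_\omega - 1}$. I would use the Leibniz expansion $\det(\bm M) = \sum_\sigma \operatorname{sgn}(\sigma) \prod_i M_{i,\sigma(i)}$, a sum of $m_\omega!$ signed products. In $\bm A_B^{(j)}$ exactly one column equals $\bm b$ while the remaining $m_\omega - 1$ columns are columns of $\bm A$; hence every permutation $\sigma$ selects exactly one factor from the $\bm b$-column (bounded by $\lVert \bm b \rVert$) and $m_\omega - 1$ factors from columns of $\bm A$ (each bounded by $\lVert \bm A \rVert$). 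Each of the $m_\omega!$ terms is therefore at most $\lVert \bm b \rVert \lVert \bm A \rVert^{m_\omega - 1}$ in absolute value, and the triangle inequality over all terms yields the claimed determinant bound. (Equivalently, a Laplace expansion along the $\bm b$-column reduces the task to bounding $(m_\omega - 1)\times(m_\omega - 1)$ minors of $\bm A$, each at most $(m_\omega - 1)! \lVert \bm A \rVert^{m_\omega - 1}$, and summing the $m_\omega$ cofactor terms gives the same constant.)

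Combining $\lvert \bar\omega_j \rvert \leq \lvert \det(\bm A_B^{(j)}) \rvert \leq m_\omega! \lVert \bm b \rVert \lVert \bm A \rVert^{m_\omega - 1}$ over the basic coordinates, and observing that all non-basic coordinates of $\bar{\bm\omega}$ vanish, I would take the maximum over $j$ to conclude $\lVert \bar{\bm\omega} \rVert \leq m_\omega! \lVert \bm b \rVert \lVert \bm A \rVert^{m_\omega - 1}$. The only genuine subtlety is verifying that exactly one factor per Leibniz term comes from the $\bm b$-column, so that the exponent of $\lVert \bm A \rVert$ is $m_\omega - 1$ rather than $m_\omega$; the rest is bookkeeping, and the degenerate edge cases ($\bm A = \bm 0$, or $P$ possessing no vertex at all) are checked directly and satisfy the bound trivially.
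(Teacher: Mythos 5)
Your proof is correct, but there is nothing in the paper to compare it against: the paper does not prove this lemma, it imports it by citation as a paraphrase of \citet[Lemma~4]{buchheim2023bilevel}. Your argument---vertex of a standard-form polyhedron is a basic feasible solution, Cramer's rule on the nonsingular integer basis matrix $\bm A_B$, the observation that $\lvert \det \bm A_B \rvert \geq 1$ since it is a nonzero integer, and the Leibniz (or cofactor) expansion of the numerator determinant, in which each of the $m_\omega!$ terms contains exactly one factor from the $\bm b$-column and $m_\omega - 1$ factors bounded by $\lVert \bm A \rVert$---is the standard proof of this classical bound and is essentially the argument in the cited reference. Your treatment of the edge cases (rank-deficient $\bm A$ via $r! \lVert \bm A\rVert^{r-1} \leq m_\omega! \lVert \bm A \rVert^{m_\omega - 1}$ for integer $\bm A \neq \bm 0$, and the cases $\bm A = \bm 0$ or $P$ having no vertex) is also sound, so the proposal stands as a complete, self-contained proof of the quoted result.
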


We highlight that the bound in Lemma~\ref{lemma:buchheim} can be computed in time polynomial in the input data $(\bm A, \bm b)$. %
It allows us to prove our main complexity result, which we state next.

\begin{theorem}\label{theorem:polynomial_time_lambda}
    Suppose that the following conditions are satisfied in problem~\ref{eq:two_stage_ro_general}.
    \begin{enumerate}
        \item $\mathcal X\subseteq\mathbb Z^{n_1}\cap [{\bm x}^\ell, {\bm x}^u]$.
        \item %
              For all $\bm x\in\mathcal X$ and $\bm\xi\in\Xi$, we have $\{ \bm y\in\mathcal Y : \bm T\bm x + \bm W \bm y \ge \bm h(\bm\xi) \} \subseteq [{\bm y}^\ell, {\bm y}^u]$.
        \item $\bm c(\bm \xi) = \bm C\bm\xi$, $\bm d(\bm\xi) = \bm D\bm\xi$, and $\bm h(\bm\xi) = \bm H\bm\xi$ for some matrices $\bm C$, $\bm D$, %
              and $\bm H$.
        \item The matrices, $\bm C$, $\bm D$, $\bm T$, $\bm W$, $\bm H$, are integer-valued.
        \item There exists $\bm{x} \in \mathcal X$ for which $\sup \big\{\mathcal{Q}(\bm{x}, \bm{\xi}) : \bm{\xi} \in \Xi \big\} < +\infty$.
    \end{enumerate}
    Then, there exists a finite $\bar{\lambda} \geq 0$ that is computable in polynomial time in the input data, $\bm C$, $\bm D$, $\bm T$, $\bm W$, $\bm H$, and the bounds ${\bm x}^\ell$, ${\bm x}^u$, ${\bm y}^\ell$, ${\bm y}^u$, such that
    \begin{equation*}
        \inf_{\bm x\in \mathcal X} \sup_{\bm\xi\in\Xi} \mathcal Q(\bm x, \bm \xi)
        =
        \inf_{\bm x\in \mathcal X} \sup_{\bm\xi\in\Xi} \mathcal L(\bm x, \bm \xi, \bar{\lambda}).
    \end{equation*}
\end{theorem}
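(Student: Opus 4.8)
The plan is to reduce everything to a single pointwise statement and then control the required multiplier through two LP estimates, both ultimately bounded via Lemma~\ref{lemma:buchheim}. By the strong duality in~\eqref{eq:strong_duality_general} and the monotonicity of $\mathcal{L}$ in $\lambda$, we have $\mathcal{L}(\bm x,\bm\xi,\lambda)\le\mathcal{Q}(\bm x,\bm\xi)$ for every $\lambda\ge0$, so writing $V^\star:=\inf_{\bm x\in\mathcal X}\sup_{\bm\xi\in\Xi}\mathcal{Q}(\bm x,\bm\xi)$ (finite by conditions 2 and 5) the inequality $\inf_{\bm x}\sup_{\bm\xi}\mathcal{L}(\cdot,\bar\lambda)\le V^\star$ holds for free. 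The entire task is the reverse inequality, and for that it suffices to produce a finite, polynomially computable $\bar\lambda$ with $\sup_{\bm\xi\in\Xi}\mathcal{L}(\bm x,\bm\xi,\bar\lambda)\ge V^\star$ for every $\bm x\in\mathcal X$. I would obtain this from the pointwise dichotomy: for all $(\bm x,\bm\xi)$, (i) $\mathcal{L}(\bm x,\bm\xi,\bar\lambda)=\mathcal{Q}(\bm x,\bm\xi)$ whenever $\mathcal{Q}(\bm x,\bm\xi)<\infty$, and (ii) $\mathcal{L}(\bm x,\bm\xi,\bar\lambda)\ge U$ whenever $\mathcal{Q}(\bm x,\bm\xi)=+\infty$, where $U\ge V^\star$ is a computable upper bound (e.g.\ the maximum of the objective over the boxes $[\bm x^\ell,\bm x^u]\times[\bm y^\ell,\bm y^u]\times[\bm 0,\one]$, which dominates $\sup_{\bm\xi}\mathcal{Q}(\bm x^0,\cdot)$ at the feasible $\bm x^0$ of condition 5). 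Since $\Xi\subseteq\{0,1\}^{n_p}$ is finite, (i)--(ii) give $\sup_{\bm\xi}\mathcal{L}(\bm x,\cdot,\bar\lambda)\ge V^\star$ for feasible $\bm x$ (where $\mathcal{L}=\mathcal{Q}$ on all of $\Xi$) and for infeasible $\bm x$ (where some $\bm\xi^\star$ has $\mathcal{Q}=+\infty$, hence $\mathcal{L}\ge U\ge V^\star$), closing the argument.

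The common engine is a comparison, at a fixed discrete block $\bm y_\mathrm{d}$, between the inner LP $\mathcal{L}(\bm x,\bm\xi,\lambda;\bm y_\mathrm{d})$ in~\eqref{eq:L_fixed_yd} and $\mathcal{Q}(\bm x,\bm\xi;\bm y_\mathrm{d})$ in~\eqref{eq:Q_fixed_yd}: the former relaxes the right-hand side from $\bm h(\bm\xi)=\bm H\bm\xi$ to $\bm H\bm z$ and charges the penalty $\lambda\phi(\bm z,\bm\xi)=\lambda\lVert\bm z-\bm\xi\rVert_1$ on $[\bm 0,\one]\times\{0,1\}^{n_p}$. The crucial uniformity observation is that the $\bm y_\mathrm{c}$-dual feasible region $\{\bm\pi\ge\bm 0:\bm W_\mathrm{c}^\top\bm\pi\le\bm d_\mathrm{c}(\bm\xi)\}$ and the recession cone $\{\bm r\ge\bm 0:\bm W_\mathrm{c}\bm r\ge\bm 0\}$ depend only on $\bm W_\mathrm{c}$ and (for the dual) on $\bm d_\mathrm{c}(\bm\xi)=\bm D_\mathrm{c}\bm\xi$ with $\bm\xi\in\{0,1\}^{n_p}$, never on $\bm x$ or $\bm y_\mathrm{d}$. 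Conditions 2 and 5 force this recession cone to be trivial, so the feasible region of $\mathcal{L}$ (and of $\mathcal{Q}$) in $(\bm y,\bm z)$ is bounded whenever nonempty, with a box computable from Lemma~\ref{lemma:buchheim}; in particular the objective is bounded below by a polynomially computable $-M$, uniformly over all $(\bm x,\bm\xi,\bm y_\mathrm{d})$, which also rules out any $-\infty$ values.

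For the feasible case, when $\mathcal{Q}(\bm x,\bm\xi;\bm y_\mathrm{d})<\infty$, LP duality supplies a dual-optimal $\bm\pi^\star$ that is a vertex of the dual polyhedron above; after adding slacks, and using that $\bm d_\mathrm{c}(\bm\xi)$ is integer (conditions 3--4) with $\lVert\bm d_\mathrm{c}(\bm\xi)\rVert\le n_p\lVert\bm D_\mathrm{c}\rVert$, Lemma~\ref{lemma:buchheim} bounds $\lVert\bm\pi^\star\rVert_\infty$ by a quantity $B$ independent of $\bm x,\bm\xi,\bm y_\mathrm{d}$. The shadow-price inequality then shows that perturbing $\bm z$ away from $\bm\xi$ can lower the $\bm y_\mathrm{c}$-LP value by at most $\lVert\bm H^\top\bm\pi^\star\rVert_\infty\lVert\bm z-\bm\xi\rVert_1\le B\lVert\bm H\rVert_{\mathrm{col}}\lVert\bm z-\bm\xi\rVert_1$, which is dominated by the penalty $\lambda\lVert\bm z-\bm\xi\rVert_1$ once $\lambda\ge\bar\lambda_1:=B\lVert\bm H\rVert_{\mathrm{col}}$; hence $\bm z=\bm\xi$ is optimal and $\mathcal{L}(\bm x,\bm\xi,\lambda;\bm y_\mathrm{d})=\mathcal{Q}(\bm x,\bm\xi;\bm y_\mathrm{d})$. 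Because the bound $B$ is uniform, this holds simultaneously for every feasible $\bm y_\mathrm{d}$, and combined with the infeasible-$\bm y_\mathrm{d}$ estimate below, taking the minimum over $\bm y_\mathrm{d}$ yields part (i).

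The delicate case, and the one I expect to be the main obstacle, is the infeasible case $\mathcal{Q}(\bm x,\bm\xi;\bm y_\mathrm{d})=+\infty$, i.e.\ $\{\bm y_\mathrm{c}\ge\bm 0:\bm W_\mathrm{c}\bm y_\mathrm{c}\ge\bm b\}=\emptyset$ for $\bm b=\bm H\bm\xi-\bm T\bm x-\bm W_\mathrm{d}\bm y_\mathrm{d}$, while the relaxed system may become nonempty. If relaxation leaves it empty then $\mathcal{L}(\cdot;\bm y_\mathrm{d})=+\infty\ge U$ trivially; otherwise I would invoke a Farkas certificate $\bm\pi\ge\bm 0$ with $\bm W_\mathrm{c}^\top\bm\pi\le\bm 0$ and $\bm\pi^\top\bm b\ge 1$ (a positive integer, since all data are integral), chosen from the bounded generators of the fixed cone $\{\bm\pi\ge\bm 0:\bm W_\mathrm{c}^\top\bm\pi\le\bm 0\}$ so that $\lVert\bm\pi\rVert_\infty$ admits a uniform Lemma~\ref{lemma:buchheim}-type bound. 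Any $\bm z$ restoring feasibility must satisfy $\bm\pi^\top\!\big(\bm b+\bm H(\bm z-\bm\xi)\big)\le 0$, which forces $\lVert\bm z-\bm\xi\rVert_1\ge 1/(\lVert\bm H\rVert_{\mathrm{col}}\lVert\bm\pi\rVert_\infty)=:\epsilon>0$; hence every feasible point of $\mathcal{L}(\cdot;\bm y_\mathrm{d})$ pays at least $\lambda\epsilon$ in penalty, giving $\mathcal{L}(\cdot;\bm y_\mathrm{d})\ge -M+\lambda\epsilon\ge U$ as soon as $\lambda\ge\bar\lambda_2:=(U+M)/\epsilon$. Setting $\bar\lambda=\max(\bar\lambda_1,\bar\lambda_2)$ then establishes (i)--(ii). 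The difficulty here is precisely obtaining the \emph{uniform} positive repair cost $\epsilon$ and the uniform objective floor $-M$ across exponentially many triples $(\bm x,\bm\xi,\bm y_\mathrm{d})$; both reduce to the fact that the governing dual polyhedron, recession cone, and Farkas cone are dictated by $\bm W_\mathrm{c}$ and $\bm D_\mathrm{c}$ alone, which is what turns instance-wise LP bounds into a single $\bar\lambda$ whose description (though possibly astronomically large in value, consistent with the remark after Lemma~\ref{lemma:buchheim}) is computable in polynomial time.
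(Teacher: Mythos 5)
Your proof follows the same skeleton as the paper's: both reduce the theorem to a pointwise statement about the fixed-$\bm y_\mathrm{d}$ linear programs $\mathcal{Q}(\bm x,\bm\xi;\bm y_\mathrm{d})$ and $\mathcal{L}(\bm x,\bm\xi,\lambda;\bm y_\mathrm{d})$, split into the cases $\mathcal{Q}(\cdot;\bm y_\mathrm{d})<+\infty$, and $\mathcal{Q}(\cdot;\bm y_\mathrm{d})=+\infty$ with the relaxed region $\Pi(\bm x,\bm y_\mathrm{d})$ empty or nonempty, and use Lemma~\ref{lemma:buchheim} to make the multiplier bound uniform in $(\bm x,\bm\xi,\bm y_\mathrm{d})$. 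Your feasible case is essentially the paper's argument recast as a sensitivity bound: the paper writes the dual of $\mathcal{Q}(\cdot;\bm y_\mathrm{d})$, observes that the dual of $\mathcal{L}(\cdot;\bm y_\mathrm{d})$ is the same program with the extra constraint $\alpha\le\lambda$, and bounds a dual vertex via Lemma~\ref{lemma:buchheim}; your shadow-price inequality with the dual vertex $\bm\pi^\star$ delivers the same bound in different packaging. Where you genuinely diverge is the infeasible case with $\Pi(\bm x,\bm y_\mathrm{d})\neq\emptyset$: the paper invokes strong duality of the restricted problem to know that some finite $\lambda$ pushes $\mathcal{L}(\cdot;\bm y_\mathrm{d})$ above $U$, encodes this as feasibility of a single linear system in $(\bm\mu,\bm\beta,\alpha,\lambda)$, and applies Lemma~\ref{lemma:buchheim} to that system with $V=U$; you instead extract a Farkas certificate showing that every point of $\Pi(\bm x,\bm y_\mathrm{d})$ pays a uniformly positive penalty $\epsilon$, and combine this with an objective floor $-M$. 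Your route is more geometric and makes the role of integrality transparent, but it is also where the loose ends sit.

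First, Lemma~\ref{lemma:buchheim} cannot deliver the bound you need on the Farkas certificate: the cone $\{\bm\pi\ge\bm 0:\bm W_\mathrm{c}^\top\bm\pi\le\bm 0\}$ is a homogeneous system, so the lemma's bound $m_\omega!\,\lVert\bm b\rVert\,\lVert\bm A\rVert^{m_\omega-1}$ degenerates to zero ($\bm b=\bm 0$), and normalizing the cone (say by $\one^\top\bm\pi=1$) yields rational, not integer, vertices, so the step ``$\bm\pi^\top\bm b>0$ and integral, hence $\ge 1$'' breaks. To repair it you need integer extreme rays with Hadamard/Cramer subdeterminant bounds (or a lower bound of the form $1/\Delta$ on positive rational values of $\bm\pi^\top\bm b$), which is standard but is a genuinely different tool; the paper sidesteps this entirely because its dual system carries an inhomogeneous right-hand side to which Lemma~\ref{lemma:buchheim} applies directly. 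Second, your uniform floor $-M$ ``over all $(\bm x,\bm\xi,\bm y_\mathrm{d})$'' is overclaimed when $\mathcal{Y}_\mathrm{d}$ is unbounded: condition 2 only boxes those $\bm y$ that are feasible for some $\bm\xi\in\Xi$, while $\Pi(\bm x,\bm y_\mathrm{d})$ can be nonempty for $\bm y_\mathrm{d}$ far outside that box, where $\bm d_\mathrm{d}(\bm\xi)^\top\bm y_\mathrm{d}$ and the Lemma~\ref{lemma:buchheim} box for $\bm y_\mathrm{c}$ are uncontrolled. To be fair, the paper's own bound shares this restriction---its $\theta_1$ is built from $\lVert\bm y^\ell_\mathrm{d}\rVert$ and $\lVert\bm y^u_\mathrm{d}\rVert$---so this is a gap you inherit rather than introduce, but your write-up asserts the uniformity as established rather than flagging it as an assumption on the range of $\bm y_\mathrm{d}$.
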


We provide some intuition behind the proof of this theorem, before presenting it. For simplicity, consider first the case in which the second-stage decisions $\bm y$ are continuous. In this setting, it is relatively easy to show that $\lambda$ is equivalent to the dual variable of the constraint, ``$\phi(\bm\xi, \bm z) \le 0$'' which, in turn, enforces \mbox{``$\bm \xi = \bm z$''}. However, this multiplier is a function of the first-stage decisions $\bm x$ and uncertain parameter realization $\bm \xi$; denote it as $\lambda^*(\bm x,\bm\xi)$. The key idea is that any value $\lambda \ge \lambda^*(\bm x, \bm\xi)$ is also an optimal Lagrange multiplier. Theorefore, our main argument relies on upper bounding the optimal dual variables of a parameterized linear program. For this, we use classical results from parametric optimization, specifically Lemma~\ref{lemma:buchheim}, which allows us to obtain an upper bound on the optimal multiplier, independent of~$\bm x$ and~$\bm\xi$. Moreover, this bound is easily shown to be computable in polynomial time. When the second-stage problem is an MILP, we rely on similar arguments except that we consider an LP that is parameterized not only by $\bm x$ and $\bm\xi$, but also by the discrete second-stage decisions $\bm y_\mathrm{d}$. We note that the integrality requirements on the matrices $\bm C, \bm D, \bm T, \bm W$ and $\bm H$ are necessary to obtain a formal statement of complexity. %
	We now present a formal proof of the theorem.

\begin{proof}[Proof of Theorem~\ref{theorem:polynomial_time_lambda}]
    Let $U$ denote any finite upper bound on the optimal value of problem~\ref{eq:two_stage_ro_general},
    \begin{equation*}
        \inf_{\bm{x} \in \mathcal{X}} \sup_{\bm{\xi} \in \Xi} \, \mathcal{Q}(\bm{x}, \bm{\xi}) \le U.
    \end{equation*}
    Note that $U$ exists due to the last condition in the hypothesis of the theorem. Moreover, it can be easily shown, similar to~\citet[Theorem~5]{subramanyam2022lagrangian}, that under the stated hypotheses, $U$ can be computed in polynomial time.

    Now, denote the feasible region of $\mathcal L(\bm x, \bm\xi,\lambda ; {\bm y}_\mathrm{d})$, defined in~\eqref{eq:L_fixed_yd}, as
    \begin{equation*}
        \Pi(\bm x, {\bm y}_\mathrm{d}) = \left\{ (\bm y_\mathrm{c}, \bm z)\in\mathbb R^{nc_2}_{+} \times[0,1]^{n_p} : \bm{T}\bm{x} + \bm{W}_\mathrm{c} \bm{y}_\mathrm{c} + \bm W_\mathrm{d} {\bm y}_\mathrm{d} \geq \bm h (\bm z) \right\}.
    \end{equation*}
    The majority of the proof is concerned with showing that one can compute a finite $\bar{\lambda}$ in polynomial time, satisfying the following relationship for all $\bm{x} \in \mathcal X$, $\bm \xi \in \Xi$ and ${\bm y}_\mathrm{d} \in \mathcal Y_\mathrm{d}$:
    \begin{equation}\label{eq:complexity:to_show}
        \mathcal{L}(\bm{x}, \bm{\xi}, \bar{\lambda} ; {\bm y}_\mathrm{d})
        \begin{cases}
            = \mathcal Q(\bm{x}, \bm{\xi}; {\bm y}_\mathrm{d}) &
            \text{if } \mathcal Q(\bm{x}, \bm{\xi}; {\bm y}_\mathrm{d}) < +\infty \text{ or } \Pi(\bm x, {\bm y}_\mathrm{d}) = \emptyset, \\
            \geq U                                             &
            \text{otherwise}.
        \end{cases}
    \end{equation}
    Supposing for the moment that this can be done, observe that we immediately obtain the desired equation stated in the theorem, since~\eqref{eq:complexity:to_show} implies
    \begin{equation*}
        \begin{aligned}
            \inf_{\bm x\in \mathcal X} \sup_{\bm\xi\in\Xi} \mathcal{L}(\bm{x}, \bm{\xi}, \bar{\lambda})
             & =
            \inf_{\bm x\in \mathcal X} \sup_{\bm\xi\in\Xi} \inf_{\bm y_\mathrm{d} \in \mathcal Y_\mathrm{d}} \mathcal{L}(\bm{x}, \bm{\xi}, \bar{\lambda} ; \bm y_\mathrm{d})
            \\
             & =
            \inf_{\bm x\in \mathcal X} \sup_{\bm\xi\in\Xi} \inf_{\bm y_\mathrm{d} \in \mathcal Y_\mathrm{d}} Q(\bm{x}, \bm{\xi}; \bm y_\mathrm{d})
            \\
             & =
            \inf_{\bm x\in \mathcal X} \sup_{\bm\xi\in\Xi} \mathcal{Q}(\bm{x}, \bm{\xi}),
        \end{aligned}
    \end{equation*}
    where we used the fact that $U$ is an upper bound on the optimal value of~\ref{eq:two_stage_ro_general}.

    We now proceed to establish the validity of~\eqref{eq:complexity:to_show}.
    Fix $\bm x\in \mathcal X$, $\bm \xi\in\Xi$, and ${\bm y}_\mathrm{d}\in \mathcal Y_\mathrm{d}$.
    The key idea is that $\mathcal Q(\bm x, \bm\xi; {\bm y}_\mathrm{d})$ can be equivalently written as:
    \begin{equation*}
        \mathcal{Q}(\bm{x}, \bm{\xi}; {\bm y}_\mathrm{d}) =
        \left[
            \begin{aligned}
                \mathop{\text{minimize}}_{\bm{y}_\mathrm{c} \in \mathbb{R}^{nc_2}_{+},  \bm{z} \in \mathbb{R}^{n_p}_{+}}\;\;
                                             & \bm{c}(\bm{\xi})^\top \bm{x}  + \bm{d}_\mathrm{c}(\bm{\xi})^\top \bm{y}_\mathrm{c} + \bm{d}_\mathrm{d}(\bm{\xi})^\top {\bm y}_\mathrm{d} \\
                \text{subject to} \;\;\;\;\; & \bm{T}\bm{x} + \bm{W}_\mathrm{c} \bm{y}_\mathrm{c} + \bm W_\mathrm{d} {\bm y}_\mathrm{d} \geq \bm{H} \bm z,                              \\
                                             & \bm z \le \one, \;\; \phi(\bm z, \bm \xi) \le 0.
            \end{aligned}
            \right]
    \end{equation*}
    Similarly, since $\phi(\bm z, \bm \xi) \geq 0$ from \citet[Lemma~1]{subramanyam2022lagrangian}, we note that $\mathcal{L}(\bm{x}, \bm{\xi}, \lambda; {\bm y}_\mathrm{d})$ can be equivalently written as:
    \begin{equation*}
        \mathcal{L}(\bm{x}, \bm{\xi}, \lambda; {\bm y}_\mathrm{d})
        =\left[
            \begin{aligned}
                \mathop{\text{minimize}}_{\substack{\bm{y}_\mathrm{c} \in \mathbb{R}^{nc_2}_{+},  \bm{z} \in \mathbb{R}^{n_p}_{+}                                 \\w \in \mathbb R_{+}}}\;\;
                                             &
                \bm{c}(\bm{\xi})^\top \bm{x}  + \bm{d}_\mathrm{c}(\bm{\xi})^\top \bm{y}_\mathrm{c} + \bm{d}_\mathrm{d}(\bm\xi)^\top{\bm y}_\mathrm{d} + \lambda w \\
                \text{subject to} \;\;\;\;\; & \bm{T}\bm{x} + \bm{W}_\mathrm{c} \bm{y}_\mathrm{c} + \bm W_\mathrm{d}{\bm y}_\mathrm{d} \geq \bm{H} \bm{z},        \\
                                             & \bm{z} \leq \one, \;\; \phi(\bm{z}, \bm{\xi}) \le w.
            \end{aligned}
            \right]
    \end{equation*}
    We now distinguish two cases.
    \begin{enumerate}
        \item
              Suppose $\mathcal{Q}(\bm{x}, \bm{\xi}; {\bm y}_\mathrm{d}) < +\infty$.

              Together with \citet[Assumption~A1]{subramanyam2022lagrangian}, this means that $\mathcal{Q}(\bm{x}, \bm{\xi}; {\bm y}_\mathrm{d})$ is finite. Strong linear programming duality then implies that it can be equivalently written as the (finite) optimal value of the following problem, where we have also expanded $\phi(\bm z, \bm \xi)$ using its definition~\eqref{eq:penalty_general}.
              \begin{equation*}
                  \begin{aligned}
                      \mathop{\text{maximize}}_{\bm{\mu} \in \mathbb R_{+}^m, \bm{\beta} \in \mathbb R_{+}^{n_p}, \alpha \in \mathbb{R}_{+}} \;\;
                                                         &
                      \bm{c}(\bm{\xi})^\top \bm{x}  + \bm{d}_\mathrm{d}(\bm\xi)^\top{\bm y}_\mathrm{d}
                      + ( - \bm T\bm x - \bm W_\mathrm{d}{\bm y}_\mathrm{d} )^\top\bm\mu  + \one^\top(\alpha \bm\xi - \bm\beta)
                      \\
                      \text{subject to} \;\;\;\;\;\;\;\; & \bm W_\mathrm{c}^\top \bm\mu \leq \bm d_\mathrm{c}(\bm\xi),     \\
                                                         & (2\bm\xi - \one)\alpha - \bm H^\top\bm\mu - \bm\beta \le \bm 0.
                  \end{aligned}
              \end{equation*}
              The dual of $\mathcal{L}(\bm{x}, \bm{\xi}, \lambda; {\bm y}_\mathrm{d})$ is identical to the above problem with the additional constraint, $\alpha \leq \lambda$.
              Therefore, one can ensure~\eqref{eq:complexity:to_show} by choosing $\bar{\lambda}$ to be any upper bound on an optimal value of $\alpha$.
              Without loss of generality (e.g., by converting the dual problem to standard form), there exists an optimal solution to the dual problem that lies at a vertex of the polyhedron defining its feasible region.
              Lemma~\ref{lemma:buchheim} ensures that all entries, and in particular $\alpha$, of such a vertex can be upper bounded by
              \begin{equation*}
                  (nc_2 + n_p)! \lVert \bm d_\mathrm{c}(\bm\xi) \rVert \max\{ \lVert \bm W_\mathrm{c} \rVert, \lVert \bm H \rVert, \lVert 2\bm\xi - \one \rVert, 1 \}^{ nc_2 + n_p - 1 }.
              \end{equation*}
              Observe now that $\lVert \bm d_\mathrm{c}(\bm\xi) \rVert = \lVert \bm D_\mathrm{c} \bm \xi \rVert = \lVert \bm D_\mathrm{c} \bm \xi \rVert_{\infty} \leq \lVert \bm D_\mathrm{c} \rVert_{\infty} \lVert \bm \xi \rVert_{\infty} \leq \lVert \bm D_\mathrm{c} \rVert_{\infty}$, since $\bm \xi \in \{0, 1\}^{n_p}$ and where $\lVert \bm D_\mathrm{c} \rVert_{\infty}$ denotes the vector-induced matrix norm of $\bm D_\mathrm{c}$.
              Also, $\lVert 2\bm\xi - \one \rVert \leq 1$.
              One can therefore choose $\bar{\lambda}$ as follows:
              \begin{equation*}
                  \hat{\lambda} \geq (nc_2 + n_p)! \lVert \bm D_\mathrm{c} \rVert_\infty \max\{ \lVert \bm W_\mathrm{c} \rVert, \lVert \bm H \rVert, 1 \}^{ nc_2 + n_p - 1 }.
              \end{equation*}
              Note that this bound does not depend on the chosen $\bm x$, $\bm \xi$ or ${\bm y}_\mathrm{d}$ and is computable in polynomial time in the input data.

        \item
              Suppose now $\mathcal{Q}(\bm{x}, \bm{\xi}; {\bm y}_\mathrm{d}) = +\infty$.
              We again consider two cases.
              \begin{enumerate}
                  \item
                        Suppose $\Pi(\bm x, {\bm y}_\mathrm{d}) = \emptyset$. Then, $\mathcal L(\bm x, \bm\xi,\lambda ; {\bm y}_\mathrm{d}) = +\infty$ for any $\lambda \ge 0$. Hence, one can safely choose any $\bar{\lambda} \geq 0$ to achieve~\eqref{eq:complexity:to_show}.
                  \item
                        Suppose $\Pi(\bm x, {\bm y}_\mathrm{d}) \neq \emptyset$.
                        Using strong Lagrangian duality~\eqref{eq:strong_duality_general}, %
                        it follows that for any finite $V$, there must exist a sufficiently large yet finite $\lambda$ satisfying $\mathcal L(\bm x, \bm\xi, \lambda ; {\bm y}_\mathrm{d}) \geq V$.
                        Since $\mathcal L(\bm x, \bm\xi, \lambda ; {\bm y}_\mathrm{d})$ is also finite (because $\Pi(\bm x, {\bm y}_\mathrm{d}) \neq \emptyset$),
                        finding $\lambda$ satisfying $\mathcal L(\bm x, \bm\xi, \lambda ; {\bm y}_\mathrm{d}) \geq V$ is equivalent to replacing $\mathcal L(\bm x, \bm\xi,\lambda ; {\bm y}_\mathrm{d})$ by its (finite) dual and finding a feasible vector $(\bm{\mu}, \bm{\beta}, \alpha, \lambda) \in \mathbb R_{+}^m \times \mathbb R_{+}^{n_p} \times \mathbb{R}_{+} \times \mathbb{R}_{+}$ satisfying the linear constraints:
                        \begin{align*}
                             &
                            (- \bm T\bm x - \bm W_\mathrm{d}{\bm y}_\mathrm{d})^\top\bm\mu  + \one^\top(\alpha \bm  \xi - \bm \beta)
                            \geq V - \bm{c}(\bm{\xi})^\top \bm{x}  - \bm{d}_\mathrm{d}(\bm\xi)^\top{\bm y}_\mathrm{d}, \\
                             & \bm W_\mathrm{c}^\top \bm\mu \leq \bm d_\mathrm{c}(\bm\xi),                             \\
                             & (2\bm\xi - \bm e)\alpha -\bm H^\top\bm\mu -\bm\beta \le \bm 0,                          \\
                             & \alpha - \lambda \leq 0.
                        \end{align*}
                        We can then achieve~\eqref{eq:complexity:to_show} by choosing $V = U$ and choosing $\bar{\lambda}$ to be an upper bound on the entry $\lambda$ of a feasible solution of the above inequality system.
                        To that end, it suffices to bound the vertices of the (equivalent standard form) polyhedron defined by the above inequalities in variables $(\bm\mu,\bm\beta,\alpha,\lambda)$.
                        Lemma~\ref{lemma:buchheim} can be used to compute such a bound in polynomial time.
                        In particular, it can be shown that one can choose
                        \begin{equation*}
                            \bar{\lambda} \geq ( nc_\mathrm{2} + n_p + 2)! \cdot \theta_1^{nc_\mathrm{2} + n_p + 2} \cdot \theta_2 \cdot \theta_3^{nc_2 + n_p + 1},
                        \end{equation*}
                        with $\theta_1 = \max\{ \lVert \bm x^\ell \rVert, \lVert \bm x^u \rVert, \lVert \bm y^\ell_\mathrm{d} \rVert, \lVert \bm y^u_\mathrm{d} \rVert, 1 \}$, $\theta_2 = \max\{ U + \lVert \bm{C}\rVert_\infty + \lVert \bm{D}_\mathrm{d}\rVert_\infty, \lVert \bm D_\mathrm{c} \rVert_\infty \}$ and $\theta_3 = \max\{ \lVert \bm W_\mathrm{d} \rVert_\infty + \lVert \bm T \rVert_\infty, \lVert \bm W_\mathrm{c} \rVert, \lVert \bm H \rVert, 1 \}$.
                        As before, note that this bound does not depend on the chosen $\bm x$, $\bm \xi$ or ${\bm y}_\mathrm{d}$.
              \end{enumerate}
    \end{enumerate}
    The validity of~\eqref{eq:complexity:to_show} now simply follows by defining $\bar{\lambda}$ to be the maximum of the three bounds obtained in the three disjunctions.%
\end{proof}

The proof of Theorem~\ref{theorem:polynomial_time_lambda} is constructive: it shows how one can derive a closed-form upper bound on the optimal Lagrange multiplier. However, the bound is too large to be useful in practice; therefore, the theorem should only be viewed as providing a complexity-theoretic result rather than a practical solution.  Even in problems where all first- and second-stage decisions are continuous, deriving a ``small'' upper bound on the optimal multiplier \emph{ex ante} can be practically challenging; we refer to the literature on bilevel optimization \citep{Kleinert2020,buchheim2023bilevel} for related work. %

\subsection{Algorithmic Modifications}
\label{sec:algo_modifications}

Although one can compute an optimal multiplier in polynomial time, it does not preclude the possibility that verifying optimality of a given multiplier cannot be done in polynomial time. To that end, we propose practical yet simple strategies in cases where a candidate multiplier may be suboptimal. In particular, we propose algorithmic modifications to the Benders decomposition and column-and-constraint generation algorithms originally presented in~\citet{subramanyam2022lagrangian}. Notably, these modifications enable us to recover exact methods for general instances of problems~\ref{eq:two_stage_ro_general} and~\ref{eq:two_stage_ro_indicator}, even if they do not satisfy any of the conditions presented in Theorems~\ref{theorem:optimal_multiplier_corrected}--\ref{theorem:polynomial_time_lambda}.

Classical versions of the Benders and column-and-constraint generation algorithms obtain upper bounds on the two-stage problem by solving $\sup \{ \mathcal{Q}(\bm{x}, \bm{\xi}) : \bm{\xi} \in \Xi \}$ for some fixed $\bm{x} \in \mathcal{X}$. Instead, \citet{subramanyam2022lagrangian} propose to solve $\sup \{ \mathcal{L}(\bm{x}, \bm{\xi}, \lambda) : \bm{\xi} \in \Xi \}$, where the second-stage value function is replaced by the Lagrangian function.
In the absence of sufficient conditions that ensure optimality of $\lambda$, it may be possible %
that the calculated upper bounds are no longer rigorous.

This issue can be addressed by a simple modification.
The key idea is to use \citet[Theorem~3]{subramanyam2022lagrangian} which provides \emph{necessary conditions} for the optimality of a Lagrange multiplier.
This theorem is exploited in Algorithms~4 and~5 of the original paper, proposed for problems~\ref{eq:two_stage_ro_general} and~\ref{eq:two_stage_ro_indicator}, respectively, which output either an uncertain parameter realization that makes the second-stage problem infeasible or a Lagrange multiplier which satisfies the necessary conditions.
These algorithms are then embedded within the corresponding Benders and column-and-constraint generation algorithms.
To ensure that the latter do not terminate incorrectly, the proposed modification indirectly verifies optimality of the calculated Lagrange multiplier \emph{ex post}.

To simplify exposition and maintain consistency with the original paper, we first illustrate this modification in the context of the Benders decomposition and column-and-constraint generation algorithms for solving formulation~\ref{eq:two_stage_ro_indicator} with continuous second-stage decisions ($\mathcal Y = \mathbb{R}_{+}^{n_2}$). In particular, this problem structure arises in the first two numerical experiments of the original paper. We then present the modifications for the more general formulation~\ref{eq:two_stage_ro_general} with mixed-integer second-stage decisions, which arises in the third experiment of the original paper as well as the counterexample in Section~\ref{sec:counterexample}.
We omit presenting modifications for problem~\ref{eq:two_stage_ro_indicator} with mixed-integer second-stage decisions since it is very similar to the latter.
We finally close the paper with a discussion about the computational efficiency of the proposed modifications.

\subsubsection{Modifications for Problem~\ref{eq:two_stage_ro_indicator}}
The updated versions of the Benders and column-and-constraint generation algorithms are shown in Algorithm~\ref{algo:indicator:updated}. %
The algorithm indirectly checks if the estimated upper bound is less than the optimal value of the original problem, by solving %
\begin{equation}\label{eq:worst_case_problem_indicator} %
    \begin{aligned}
        \mathop{\text{maximize}}_{\substack{\bm{\xi} \in \Xi, \bm{\mu} \in \mathbb{R}^m_{+} \\ \bm{\rho} \in \mathbb{R}^{n_p}_{+}, \bm{\nu} \in \mathbb{R}^{n_p}_{+}}} \;\;
         &
        \bm{c}(\bm{\xi})^\top\bm{x} +
        (\bm{h}_0 - \bm{T}\bm{x})^\top \bm{\psi}(\bm \mu, \bm\rho, \bm \nu)                 \\
        \text{subject to} \;\;\;
         & \bm{W}^\top \bm{\psi}(\bm \mu, \bm\rho, \bm \nu) \leq \bm{d}(\bm{\xi}),          \\
         & \xi_j = 0 \implies \rho_j = 0, \; j \in [n_p],                                   \\
         & \xi_j = 1 \implies \nu_j = 0, \; j \in [n_p],
    \end{aligned}
\end{equation}
where $\bm \psi$ is defined as follows:
\[
    \bm{\psi}(\bm \mu, \bm\rho, \bm \nu) = \bm \mu - \sum_{j \in [n_p]} \sum_{i \in \mathcal{I}_j^1} \rho_j \one_i - \sum_{j \in [n_p]}\sum_{i \in \mathcal{I}_j^0} \nu_j \one_i.
\]

\begin{algorithm}[!htbp]
    \caption{Updated Benders decomposition and column-and-constraint generation algorithms to solve~\ref{eq:two_stage_ro_indicator} when $\mathcal Y = \mathbb{R}_{+}^{n_2}$}
    \label{algo:indicator:updated}
    \begin{algorithmic}
        \STATE Benders: Run all lines of \citet[Algorithm~6]{subramanyam2022lagrangian}
        \STATE Column-and-constraint generation: Run all lines of \citet[Algorithm~7]{subramanyam2022lagrangian}
        \IF {$\hat{\bm{x}} \neq \emptyset$}
        \STATE Set $Z$ and $(\hat{\bm\rho}, \hat{\bm \nu})$ as the optimal value and (projected) solution of~\eqref{eq:worst_case_problem_indicator} (at $\bm{x} = \hat{\bm{x}}$)
        \IF {$UB < Z$}
        \STATE Update $UB \gets Z$ and $\lambda \gets \max\left\{ \lVert \hat{\bm \rho} \rVert_\infty, \lVert \hat{\bm\nu} \rVert_\infty \right\}$
        \STATE Go to line~2 of the original algorithm %
        \ENDIF
        \ENDIF
    \end{algorithmic}
\end{algorithm}

The following theorem rigorously justifies the proposed modification. It shows that an optimal value of the Lagrange multiplier can be easily computed given an optimal solution of problem~\eqref{eq:worst_case_problem_indicator}.
\begin{theorem}\label{theorem:optimal_multiplier_ex_post_indicator}
    Suppose $\mathcal Y = \mathbb R^{n_2}_{+}$ and $\bm x \in \mathcal X$ is any feasible first-stage decision in problem~\ref{eq:two_stage_ro_indicator}.
    Let $(\hat{\bm\xi}, \hat{\bm\mu}, \hat{\bm\rho}, \hat{\bm\nu})$ denote an optimal solution of problem~\eqref{eq:worst_case_problem_indicator}.
    Then, $\bar \lambda = \max\left\{ \lVert \hat{\bm \rho} \rVert_\infty, \lVert \hat{\bm\nu} \rVert_\infty \right\}$ is an optimal multiplier satisfying
    \begin{equation*}
        \sup_{\bm\xi\in\Xi} \mathcal Q_\mathcal{I}(\bm x, \bm \xi)
        =
        \sup_{\bm\xi\in\Xi} \mathcal L_\mathcal{I}(\bm x, \bm \xi, \bar{\lambda}).
    \end{equation*}
\end{theorem}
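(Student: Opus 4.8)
The plan is to prove the two inequalities separately, with all the nontrivial content concentrated in a single evaluation at the optimal realization $\hat{\bm\xi}$ returned by~\eqref{eq:worst_case_problem_indicator}. For the easy direction I would first record that $\phi_\mathcal I(\bm x,\bm y,\bm\xi)\ge 0$ whenever $\bm g(\bm x,\bm y)\ge\bm 0$ (each summand is a nonnegative multiple, $\xi_j$ or $1-\xi_j$, of a nonnegative $g_i$), and that $\phi_\mathcal I$ vanishes on the feasible region of $\mathcal Q_\mathcal I(\bm x,\bm\xi)$ where every active indicator forces $g_i=0$. Since $\mathcal L_\mathcal I(\bm x,\bm\xi,\lambda)$ minimizes the penalized objective over the relaxed set $\{\bm g\ge\bm 0\}$, which contains the feasible set of $\mathcal Q_\mathcal I(\bm x,\bm\xi)$ and agrees with its objective there, minimizing over the larger set gives $\mathcal L_\mathcal I(\bm x,\bm\xi,\lambda)\le\mathcal Q_\mathcal I(\bm x,\bm\xi)$ for every $\lambda\ge 0$ and every $\bm\xi$, whence $\sup_{\bm\xi}\mathcal L_\mathcal I(\bm x,\bm\xi,\bar\lambda)\le\sup_{\bm\xi}\mathcal Q_\mathcal I(\bm x,\bm\xi)$.

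For the reverse direction it suffices to exhibit a single realization at which $\mathcal L_\mathcal I$ already attains the worst-case value. I would take $\hat{\bm\xi}$ and let $Z$ be the optimal value of~\eqref{eq:worst_case_problem_indicator}. The first observation is that, with $\mathcal Y=\mathbb R^{n_2}_+$, problem~\eqref{eq:worst_case_problem_indicator} is exactly $\sup_{\bm\xi\in\Xi}$ of the linear-programming dual of $\mathcal Q_\mathcal I(\bm x,\bm\xi)$, with $\bm\psi$ playing the role of the effective row-multiplier vector; hence $Z=\sup_{\bm\xi}\mathcal Q_\mathcal I(\bm x,\bm\xi)$, the value $\mathcal Q_\mathcal I(\bm x,\hat{\bm\xi})=Z$ is finite by feasibility of $\bm x$, and $\bm\psi^\star:=\bm\psi(\hat{\bm\mu},\hat{\bm\rho},\hat{\bm\nu})$ is an optimal dual solution of $\mathcal Q_\mathcal I(\bm x,\hat{\bm\xi})$. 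The goal thus reduces to showing $\mathcal L_\mathcal I(\bm x,\hat{\bm\xi},\bar\lambda)\ge Z$; combined with the easy direction this forces $\mathcal L_\mathcal I(\bm x,\hat{\bm\xi},\bar\lambda)=Z$, and the chain $\sup_{\bm\xi}\mathcal L_\mathcal I(\bm x,\bm\xi,\bar\lambda)\ge Z=\sup_{\bm\xi}\mathcal Q_\mathcal I(\bm x,\bm\xi)\ge\sup_{\bm\xi}\mathcal L_\mathcal I(\bm x,\bm\xi,\bar\lambda)$ then closes the proof.

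To establish $\mathcal L_\mathcal I(\bm x,\hat{\bm\xi},\bar\lambda)\ge Z$, I would write the penalty at $\hat{\bm\xi}$ as a single linear form $\phi_\mathcal I(\bm x,\bm y,\hat{\bm\xi})=\bm\pi^\top\bm g(\bm x,\bm y)$, where $\pi_i$ counts the active indicator rows at index $i$ (those $j$ with $i\in\mathcal I_j^1,\hat\xi_j=1$, or $i\in\mathcal I_j^0,\hat\xi_j=0$); crucially $\pi_i\ge 1$ precisely on the rows that $\mathcal Q_\mathcal I(\bm x,\hat{\bm\xi})$ forces to equality. Dualizing the LP defining $\mathcal L_\mathcal I(\bm x,\hat{\bm\xi},\lambda)$ and performing the substitution $\bm\psi=\bm\mu-\lambda\bm\pi$ that absorbs $\lambda\bm\pi$ into the row multipliers, one finds $\mathcal L_\mathcal I(\bm x,\hat{\bm\xi},\lambda)=\max\{\bm c(\hat{\bm\xi})^\top\bm x+(\bm h_0-\bm T\bm x)^\top\bm\psi:\bm W^\top\bm\psi\le\bm d(\hat{\bm\xi}),\ \bm\psi\ge-\lambda\bm\pi\}$, i.e. the dual of $\mathcal Q_\mathcal I(\bm x,\hat{\bm\xi})$ with the free multipliers on the active rows replaced by the relaxed bounds $\psi_i\ge-\lambda\pi_i$. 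It then remains only to verify that $\bm\psi^\star$ is feasible for this relaxed dual at $\lambda=\bar\lambda$: on each active row one has $\psi^\star_i=\hat\mu_i-(\text{sum of the }\pi_i\text{ terms }\hat\rho_j\text{ or }\hat\nu_j)$ with $\hat\mu_i\ge 0$, so $-\psi^\star_i\le\pi_i\max\{\lVert\hat{\bm\rho}\rVert_\infty,\lVert\hat{\bm\nu}\rVert_\infty\}=\pi_i\bar\lambda$, i.e. $\psi^\star_i\ge-\bar\lambda\pi_i$. Since $\bm\psi^\star$ is feasible and already attains objective value $Z$, LP strong duality gives $\mathcal L_\mathcal I(\bm x,\hat{\bm\xi},\bar\lambda)\ge Z$.

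The main obstacle is the bookkeeping in the last step: one must track how the indicator constraints of~\eqref{eq:worst_case_problem_indicator} (namely $\xi_j=0\Rightarrow\rho_j=0$ and $\xi_j=1\Rightarrow\nu_j=0$) force the $\hat\rho,\hat\nu$ contributions to each row $i$ to have exactly $\pi_i$ nonzero terms, so that the count governing the exact-penalty threshold matches the per-row relaxation $\psi_i\ge-\lambda\pi_i$ produced by the dual substitution. Getting this matching right is exactly what makes the specific choice $\bar\lambda=\max\{\lVert\hat{\bm\rho}\rVert_\infty,\lVert\hat{\bm\nu}\rVert_\infty\}$ sufficient, rather than some larger multiple; everything else reduces to routine LP duality and the nonnegativity of $\phi_\mathcal I$.
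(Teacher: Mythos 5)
Your proof is correct, but it runs on the dual side throughout and is organized differently from the paper's. The paper establishes the same sandwich $Z \le \sup_{\bm\xi}\mathcal L_\mathcal{I}(\bm x,\bm\xi,\bar\lambda) \le \sup_{\bm\xi}\mathcal Q_\mathcal{I}(\bm x,\bm\xi) \le Z$, but by different means: for the first inequality it fixes $(\hat{\bm\xi},\hat{\bm\rho},\hat{\bm\nu})$ in~\eqref{eq:worst_case_problem_indicator}, dualizes the remaining LP over $\bm\mu$ back into a penalized minimization over $\{\bm y\in\mathcal Y:\bm g(\bm x,\bm y)\ge\bm 0\}$, and compares penalties \emph{primal-side} using $g_i\ge 0$ together with $\hat\rho_j\le\bar\lambda\hat\xi_j$ and $\hat\nu_j\le\bar\lambda(1-\hat\xi_j)$; for the last inequality it invokes strong Lagrangian duality (Theorem~1 of the original paper), rewrites $\sup_{\lambda\ge 0}\sup_{\bm\xi}\mathcal L_\mathcal{I}$ as a bilinear program via LP duality, and maps its optimum into a feasible point of~\eqref{eq:worst_case_problem_indicator} via $\tilde{\bm\rho}=\tilde\lambda\tilde{\bm\xi}$, $\tilde{\bm\nu}=\tilde\lambda(\one-\tilde{\bm\xi})$. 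You instead prove $Z=\sup_{\bm\xi}\mathcal Q_\mathcal{I}$ in one shot, by identifying the inner problem of~\eqref{eq:worst_case_problem_indicator} with the LP dual of $\mathcal Q_\mathcal{I}(\bm x,\bm\xi)$ (free multipliers exactly on the rows forced to equality by the indicators), and you obtain $\mathcal L_\mathcal{I}(\bm x,\hat{\bm\xi},\bar\lambda)\ge Z$ by dualizing $\mathcal L_\mathcal{I}$ into $\bm\psi$-space with the relaxed bounds $\psi_i\ge-\lambda\pi_i$ and checking feasibility of $\bm\psi^\star$; this last step is the dual mirror image of the paper's primal penalty comparison, hinging on the same cap $\bar\lambda$ on the active $\hat\rho_j,\hat\nu_j$. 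What your route buys: it is self-contained LP duality and never needs the strong Lagrangian duality theorem (indeed, your argument re-derives that duality at $\bar\lambda$ for the fixed $\bm x$). What it costs: your ``first observation'' is asserted rather than proved — the exactness claim $Z=\sup_{\bm\xi}\mathcal Q_\mathcal{I}$ requires both directions of the identification between achievable vectors $\bm\psi(\bm\mu,\bm\rho,\bm\nu)$ and the dual feasible set, and the surjectivity direction needs a short construction (e.g., set all active $\rho_j,\nu_j$ to a common large constant and absorb the excess into $\bm\mu\ge\bm 0$), exploiting the same per-row counts $\pi_i$ you use later. This is routine, so it is not a genuine gap, but it is a step the paper's proof avoids entirely (along with all bookkeeping in $\bm\pi$) at the price of invoking the earlier strong-duality machinery.
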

\begin{proof}
    Let $Z$ denote the optimal value of problem~\eqref{eq:worst_case_problem_indicator}.
    Now, consider problem~\eqref{eq:worst_case_problem_indicator}, where $\bm \xi$, $\bm \rho$ and $\bm \nu$ are fixed to $\hat{\bm \xi}$, $\hat{\bm \rho}$ and $\hat{\bm \nu}$, respectively.
    The resulting problem can be equivalently written as a %
    linear maximization problem over $\bm \mu \in \mathbb{R}^m_{+}$. %
    Taking the linear programming dual of this problem yields %
    \begin{equation*}
        \begin{aligned}
            Z = %
            \inf_{\bm y \in \mathcal Y(\bm x)} \;\;
            \bm{c}(\hat{\bm{\xi}})^\top\bm{x} +\bm{d}(\hat{\bm{\xi}})^\top\bm{y} + \sum_{j \in [n_p]} \sum_{i \in \mathcal{I}_j^1} \hat{\rho}_j g_i(\bm x, \bm y) + \sum_{j \in [n_p]}\sum_{i \in \mathcal{I}_j^0} \hat{\nu}_j g_i(\bm x, \bm y) %
        \end{aligned}
    \end{equation*}
    where we used the definition of $\bm g(\bm x, \bm y) = \bm T \bm x + \bm W \bm y - \bm h_0$ and define $\mathcal{Y}(\bm x) = \{\bm y \in \mathcal Y : \bm g (\bm x, \bm y) \geq \bm 0\}$.
    The definition of $\bar{\lambda}$ along with the indicator constraints in problem~\eqref{eq:worst_case_problem_indicator} imply that the inequalities, $\hat{\rho}_j \leq \bar{\lambda} \hat{\xi}_j$ and $\hat{\nu}_j \leq \bar{\lambda} (1 - \hat{\xi}_j)$, hold for all $j \in [n_p]$.
    Substituting these inequalities above gives:
    \begin{align*}
        Z
         & \leq
        \inf_{\bm y \in \mathcal Y(\bm x)} \;\;
        \bm{c}(\hat{\bm{\xi}})^\top\bm{x} +\bm{d}(\hat{\bm{\xi}})^\top\bm{y} +  \sum_{j \in [n_p]} \sum_{i \in \mathcal{I}_j^1} \bar{\lambda} \hat{\xi}_j g_i(\bm x, \bm y) + \sum_{j \in [n_p]}\sum_{i \in \mathcal{I}_j^0} \bar{\lambda}(1 - \hat{\xi}_j) g_i(\bm x, \bm y)
        \\
         & =
        \inf_{\bm y \in \mathcal Y(\bm x)} \;\;
        \bm{c}(\hat{\bm{\xi}})^\top\bm{x} +\bm{d}(\hat{\bm{\xi}})^\top\bm{y} +  \bar{\lambda} \phi_\mathcal{I}(\bm x, \bm y, \hat{\bm\xi})
        \\
         & \leq
        \sup_{\bm \xi \in \Xi} \; \inf_{\bm y \in \mathcal Y(\bm x)} \;\;
        \bm{c}(\bm{\xi})^\top\bm{x} +\bm{d}(\bm{\xi})^\top\bm{y} +  \bar{\lambda} \phi_\mathcal{I}(\bm x, \bm y, \bm\xi)
        \\
         & = \sup_{\bm\xi\in\Xi} \mathcal L_\mathcal{I}(\bm x, \bm \xi, \bar{\lambda})
        \\
         & \leq \sup_{\bm\xi\in\Xi} \mathcal Q_\mathcal{I}(\bm x, \bm \xi),
    \end{align*}
    where the first equality holds by definition of $\phi_\mathcal{I}$, the next inequality follows by taking the supremum of the previous expression over $\bm \xi \in \Xi$, the second equality follows by definition of $\mathcal L_\mathcal{I}(\bm x, \bm \xi, \bar{\lambda})$, and the last inequality follows by weak duality~\citep[Lemma~2]{subramanyam2022lagrangian}.

    Now, strong duality~\citep[Theorem~1]{subramanyam2022lagrangian} implies that
    \begin{align*}
        \sup_{\bm\xi\in\Xi} \mathcal Q_\mathcal{I}(\bm x, \bm \xi)
         & =
        \sup_{\lambda \geq 0} \sup_{\bm\xi\in\Xi} \mathcal L_\mathcal{I}(\bm x, \bm \xi, \lambda)
        \\
         & =
        \sup_{\bm \xi \in \Xi} \; \inf_{\bm y \in \mathcal Y(\bm x)} \;\;
        \bm{c}(\bm{\xi})^\top\bm{x} +\bm{d}(\bm{\xi})^\top\bm{y} +  \lambda \phi_\mathcal{I}(\bm x, \bm y, \bm\xi)
        \\
         & =
        \left[\begin{aligned}
                      \mathop{\text{maximize}}_{\lambda \geq 0, \bm{\xi} \in \Xi, \bm{\mu} \in \mathbb{R}^m_{+}} \;\;
                       &
                      \bm{c}(\bm{\xi})^\top\bm{x} +
                      (\bm{h}_0 - \bm{T}\bm{x})^\top \bm{\psi}\left( \bm \mu, \lambda\bm\xi, \lambda(\one - \bm \xi) \right)      \\
                      \text{subject to} \;\;\;\;\;
                       & \bm{W}^\top \bm{\psi}\left( \bm \mu, \lambda\bm\xi, \lambda(\one - \bm \xi)\right) \leq \bm{d}(\bm{\xi})
                  \end{aligned}\right],
    \end{align*}
    where the last equality follows by linear programming duality.
    Let $(\tilde{\lambda}, \tilde{\bm \xi}, \tilde{\bm \mu})$ be an optimal solution of the above (bilinear) optimization problem.
    Define $\tilde{\bm \rho} = \tilde{\lambda} \tilde{\bm \xi}$ and $\tilde{\bm \nu} = \tilde{\lambda} (\one - \tilde{\bm \xi})$.
    Then, it can be readily verified that $(\tilde{\bm\xi}, \tilde{\bm\mu}, \tilde{\bm\rho}, \tilde{\bm\nu})$ is a feasible solution in problem~\eqref{eq:worst_case_problem_indicator} that achieves an objective value equal to the optimal value of the above bilinear problem. This implies
    \begin{align*}
        \sup_{\bm\xi\in\Xi} \mathcal Q_\mathcal{I}(\bm x, \bm \xi) \leq Z,
    \end{align*}
    which taken together with our previously established inequality,
    \begin{align*}
        Z
        \leq \sup_{\bm\xi\in\Xi} \mathcal L_\mathcal{I}(\bm x, \bm \xi, \bar{\lambda})
        \leq \sup_{\bm\xi\in\Xi} \mathcal Q_\mathcal{I}(\bm x, \bm \xi),
    \end{align*}
    proves the claimed result.%
\end{proof}

Algorithm~\ref{algo:indicator:updated} checks if the optimal value of problem~\eqref{eq:worst_case_problem_indicator} is larger than the final estimate $UB$, and then uses the result of Theorem~\ref{theorem:optimal_multiplier_ex_post_indicator} to initialize another run of the original procedure with the updated $\lambda$ and corrected $UB$.
In doing so, it retains all data structures without re-initializing them to be empty sets.
In particular, for the Benders algorithm, the feasibility and optimality sets, $\mathcal F$ and $\mathcal O$, are retained, and all previously generated Benders cuts are simply lifted with the updated value of $\lambda$.
Similarly, for the column-and-constraint generation algorithm, the set of enumerated uncertain parameters $\mathcal R$ is retained.
It is crucial to higlight that in both algorithms, all previously generated constraints continue to remain valid, and therefore, they always provide rigorous lower bounds on the optimal value of the original two-stage problem.
In particular, this is also true for Benders cuts generated using suboptimal $\lambda$ values.
Formally, this is because of weak duality~\citep[Lemma~2]{subramanyam2022lagrangian}, which implies:
\[
    \inf_{\bm{x} \in \mathcal{X}} \sup_{\bm{\xi} \in \Xi} \mathcal{Q}_\mathcal{I}(\bm{x}, \bm{\xi})
    \geq
    \inf_{\bm{x} \in \mathcal{X}} \sup_{\bm{\xi} \in \Xi} \mathcal{L}_\mathcal{I}(\bm{x}, \bm{\xi}, \lambda)
    \; \text{ for all } \lambda \geq 0.
\]

We note that if Algorithm~5 of the original paper (which is invoked within the original Algorithms~6 and~7) outputs an optimal multiplier, then problem~\eqref{eq:worst_case_problem_indicator} is solved at most once.
This is important for reasons of computational efficiency, which we discuss at the end of the paper.

\subsubsection{Modifications for Problem~\ref{eq:two_stage_ro_general}}
We now consider the general version of~\ref{eq:two_stage_ro_general} with mixed-integer second-stage decisions.
Keeping in line with the original paper, we focus only on the column-and-constraint generation algorithm.
We first provide an updated version of the original method from~\citet{subramanyam2022lagrangian} in Algorithm~\ref{algo:ccg:inner}.
Whereas the original method assumes that sufficient conditions for optimality of $\lambda$ are already satisfied, Algorithm~\ref{algo:ccg:inner} does not make any such assumption.

\begin{algorithm}[!htbp]
    \caption{Updated version of \citet[Algorithm~8]{subramanyam2022lagrangian}}
    \label{algo:ccg:inner}
    \begin{algorithmic}[1]
        \renewcommand{\algorithmicrequire}{\textbf{Input:}}
        \renewcommand{\algorithmicensure}{\textbf{Output:}}
        \REQUIRE $\bm{x} \in \mathcal{X}$, $\lambda^0 > 0$
        \ENSURE %
        Either $\hat{\bm{\xi}} \in \Xi: \mathcal{Q}(\bm{x}, \hat{\bm{\xi}}) = +\infty$, $\hat{\lambda} = \lambda^0$ or $\hat{\bm{\xi}}, \hat{\lambda}$ satisfying conditions of \citet[Theorem~3]{subramanyam2022lagrangian}
        \STATE Initialize $\hat{\bm{\xi}} \in \Xi$ (arbitrary), $LB = -\infty$, $UB = +\infty$, $\mathcal{D} = \emptyset$, $\hat \lambda = \lambda^0$.
        \REPEAT
        \STATE Set $UB$ and $\hat{\bm{\xi}}$ as optimal value and (projected) solution of~\eqref{eq:worst_case_problem_discrete_lagrangian_duality} with $(\tau, \lambda) = (0, 1)$
        \label{algo:ccg:inner:feasibility:ub-update}
        \STATE Set
        $
            (\hat{\bm{y}}, \hat{\bm{z}}, \hat{\bm{\sigma}}) \in \mathop{\arg\min}\limits_{(\bm{y}, \bm{z}, \bm{\sigma}) \in \mathcal{Y} \times [0, 1]^{n_p} \times \mathbb{R}^m_{+}}
            \left\{\one^\top \bm{\sigma} + \phi(\bm{z}, \hat{\bm{\xi}}) : \bm{T}\bm{x} + \bm{W} \bm{y} + \bm{\sigma} \geq \bm{h}(\bm{z}) \right\}
        $
        \label{algo:ccg:inner:feasibility:lb-update}
        \STATE Update $\mathcal{D} \gets \mathcal{D} \cup \{\hat{\bm{y}}_\mathrm{d}\}$
        \STATE Update $LB = \one^\top \hat{\bm{\sigma}} + \phi(\hat{\bm{z}}, \hat{\bm{\xi}})$
        \UNTIL {$LB > 0$ or $UB = 0$}
        \IF {$UB = 0$}
        \STATE Set $LB = -\infty$
        \REPEAT
        \STATE Update $\hat{\lambda} \gets \hat{\lambda}/2$
        \label{algo:ccg:inner:optimality:lambda-initialize}
        \REPEAT
        \STATE Update $\hat{\lambda} \gets 2\hat{\lambda}$
        \STATE Set $UB$, $\tilde{\bm{\xi}}$ as optimal value and
        (projected) solution of~\eqref{eq:worst_case_problem_discrete_lagrangian_duality} with $(\tau, \lambda) = (1, \hat{\lambda})$
        \label{algo:ccg:inner:optimality:ub-update}
        \STATE Set
        $
            (\hat{\bm{y}}, \hat{\bm{z}}) \in \mathop{\arg\min}\limits_{(\bm{y}, \bm z) \in \mathcal{Y} \times [0, 1]^{n_p}}
            \left\{\bm c(\tilde{\bm \xi})^\top \bm x + \bm{d}(\tilde{\bm{\xi}})^\top \bm{y} + \hat \lambda \phi(\bm z, \tilde{\bm \xi}): \bm{T}\bm{x} + \bm{W} \bm{y} \geq \bm{h}(\bm z) \right\}
        $
        \STATE Update $\mathcal{D} \gets \mathcal{D} \cup \{\hat{\bm{y}}_\mathrm{d}\}$
        \UNTIL $\phi(\hat{\bm z}, \tilde{\bm \xi}) = 0$
        \label{algo:ccg:inner:optimality:necessary-condition}
        \IIf {$LB < \bm c(\tilde{\bm \xi})^\top \bm x + \bm{d}(\tilde{\bm{\xi}})^\top \hat{\bm{y}}$}
        update
        $LB \gets \bm c(\tilde{\bm \xi})^\top \bm x + \bm{d}(\tilde{\bm{\xi}})^\top \hat{\bm{y}}$
        and
        $\hat{\bm{\xi}} \gets \tilde{\bm{\xi}}$
        \EndIIf
        \label{algo:ccg:inner:optimality:lb-update}
        \UNTIL {$UB - LB \leq \epsilon$}
        \ENDIF
    \end{algorithmic}
\end{algorithm}

As described in the original paper, the key idea of the method is that for fixed $\bm{x} \in \mathcal X$, one can obtain a relaxation of the worst-case Lagrangian function, $\sup \{ \mathcal{L}(\bm{x}, \bm{\xi}, \lambda) : \bm{\xi} \in \Xi \}$, by enumerating the set of discrete second-stage decisions $\mathcal Y_\mathrm{d}$.
In particular, if $\mathcal D \subseteq \mathcal Y_\mathrm{d}$, then it follows that
\begin{equation*}
    \sup_{\bm \xi \in \Xi} \mathcal{L}(\bm{x}, \bm{\xi}, \lambda)
    =
    \sup_{\bm \xi \in \Xi} \inf_{\bm y_\mathrm{d} \in \mathcal Y_\mathrm{d}} \mathcal{L}(\bm{x}, \bm{\xi}, \lambda; \bm y_\mathrm{d})
    \leq
    \sup_{\bm \xi \in \Xi} \inf_{\bm y_\mathrm{d} \in \mathcal D} \mathcal{L}(\bm{x}, \bm{\xi}, \lambda; \bm y_\mathrm{d}).
\end{equation*}
Now, if $\bm y_\mathrm{d}^{(k)} \in \mathcal D$ denotes the $k^\text{th}$ element of $\mathcal{D}$, then it can be shown~\citep{zeng2013solving,subramanyam2022lagrangian} that the the following (with $\tau = 1$) is a reformulation of the problem on the right-hand side of the above inequality.
\begin{equation}\label{eq:worst_case_problem_discrete_lagrangian_duality} %
    \begin{aligned}
        \mathop{\text{maximize}}_{\eta, \bm \xi, \bm \mu, \bm \beta} \;\;
         &
        \eta
        \\
        \text{subject to} \;\;
         & \eta \in \mathbb R, \;\; \bm \xi \in \Xi, \\
         &
        \left.
        \begin{aligned}
             &
            \bm{\mu}^{(k)} \in \mathbb{R}^m_{+}, \;\; \bm{\beta}^{(k)} \in \mathbb{R}^{n_p}_{+},
            \\
             &
            \eta \leq
            \tau \bm{c}(\bm{\xi})^\top \bm{x}  + \tau \bm{d}_\mathrm{d}(\bm\xi)^\top \bm y_\mathrm{d}^{(k)} + \one^\top(\lambda \bm\xi - \bm\beta^{(k)})
            \\
             &
            \phantom{\eta \leq}
            + (\bm h_0 - \bm T\bm x - \bm W_\mathrm{d} \bm y_\mathrm{d}^{(k)} )^\top\bm\mu^{(k)},
            \\
             &
            \bm{W}_{\mathrm{c}}^\top \bm{\mu}^{(k)} \leq \tau \bm{d}_\mathrm{c}(\bm{\xi}), \; \; (1 - \tau) \bm \mu^{(k)} \leq \one,
            \\
             &
            2 \lambda\bm{\xi} - \bm{H}^\top \bm{\mu}^{(k)} - \bm{\beta}^{(k)} \leq \lambda\one,
        \end{aligned}\right\} \;\; k \in [|\mathcal D|].
    \end{aligned}
\end{equation}
The parameter $\tau \in \{0, 1\}$ serves purely to simplify notation.
Indeed, when $\tau = 0$, it can be shown that problem~\eqref{eq:worst_case_problem_discrete_lagrangian_duality} also provides an upper bound on the worst-case constraint violation function. %
The latter is equal to $0$ if and only if the first-stage decision $\bm x \in \mathcal X$ is feasible in problem~\ref{eq:two_stage_ro_general}, see~\citet{subramanyam2022lagrangian} for further details.

Algorithm~\ref{algo:ccg:inner} is almost identical to the original~\citet[Algorithm~8]{subramanyam2022lagrangian}, except for the inner loop based on the condition appearing in line~\ref{algo:ccg:inner:optimality:necessary-condition}. %
The latter enforces necessary conditions for optimality of $\lambda$ based on \citet[Theorem~3]{subramanyam2022lagrangian}.
Although these conditions are not sufficient, it is still possible that the final $\lambda$ is optimal, as illustrated in the following example.

\begin{example}[Counterexample revisited]
    We illustrate Algorithm~\ref{algo:ccg:inner} on the counterexample from Section~\ref{sec:counterexample}, with inputs $x = 0$ and $\lambda^0 = u(x) - \ell(x) = 1$.
    Let $y^{(k)} \in \{0, 1\}$ for $k \in \{1, 2\}$.
    Then, problem~\eqref{eq:worst_case_problem_discrete_lagrangian_duality} reduces to:
    \begin{equation*}
        \begin{aligned}
            \mathop{\text{maximize}}_{\xi, \eta, \mu, \beta} \;\;
             &
            \eta
            \\
            \text{subject to} \;\;
             & \xi \in \{0, 1\}, \;\; \eta \in \mathbb R, \\
             &
            \left.
            \begin{aligned}
                 &
                \mu^{(k)} \in \mathbb{R}_{+}, \;\; \beta^{(k)} \in \mathbb{R}_{+},
                \\
                 &
                \eta \leq
                -\tau y^{(k)} + \left( y^{(k)} - \frac32 \right) \mu^{(k)} - \beta^{(k)} + \lambda \xi,
                \\
                 &
                2 \lambda \xi - \mu^{(k)} - \beta^{(k)} \leq \lambda,
            \end{aligned}\right\} \;\; k \in \{1, 2\}.
        \end{aligned}
    \end{equation*}
    The various optimization problems solved in the algorithm often exhibit multiple optimal solutions.
    Straightforward computations (omitted for the sake of brevity) reveal that the algorithm will exit the first `repeat' loop with one of the following outcomes: $\mathcal D = \{0, 1\}$ after three iterations; $\mathcal D = \{0, 1\}$ after two iterations; $\mathcal D = \{0\}$ after two iterations. In all outcomes, $LB = UB = 0$, indicating that the original two-stage problem is feasible.

    \begin{enumerate}
        \item
              Suppose $D = \{0, 1\}$.
              Line~\ref{algo:ccg:inner:optimality:ub-update} yields $UB = -0.5$ and $\tilde \xi = 1$ as the unique solution.
              The next line yields $(\hat y, \hat z) = (1, 0.5)$ that is also unique.
              Since $\phi(\hat z, \hat \xi) = 0.5$, the algorithm updates $\hat \lambda = 2$.
              Line~\ref{algo:ccg:inner:optimality:ub-update} then yields $UB = 0$ and $\tilde \xi = 1$ (unique).
              The next line can yield one of two optimal solutions, $(\hat y, \hat z) = (0, 1)$ or $(\hat y, \hat z) = (1, 0.5)$.
              \begin{itemize}
                  \item If $(\hat y, \hat z) = (0, 1)$, then Line~\ref{algo:ccg:inner:optimality:lb-update} updates $LB = 0$. The algorithm stops with $\hat \lambda = 2$ and $LB = UB = 0$.
                  \item If $(\hat y, \hat z) = (1, 0.5)$, then since $\phi(\hat z, \hat \xi) = 0.5$, the algorithm updates $\hat \lambda = 4$.
                        Line~\ref{algo:ccg:inner:optimality:ub-update} yields $UB = 0$ and $\tilde \xi = 1$ (unique).
                        The next line yields $(\hat y, \hat z) = (0, 1)$ as the unique solution.
                        Line~\ref{algo:ccg:inner:optimality:lb-update} updates $LB = 0$.
                        The algorithm stops with $\hat \lambda = 4$ and $LB = UB = 0$.
              \end{itemize}

        \item
              Suppose $D = \{0\}$.
              Line~\ref{algo:ccg:inner:optimality:ub-update} yields $UB = 0$. Also, the optimal $\tilde \xi \in \{0, 1\}$.
              \begin{itemize}
                  \item If $\tilde \xi = 0$, then the next line yields $(\hat y, \hat z) = (1, 0)$ (unique).
                        The algorithm updates $\mathcal D = \{0, 1\}$. This is the same state as the beginning of case~1.
                  \item If $\tilde \xi = 1$, then the next line yields $(\hat y, \hat z) = (1, 0.5)$ (unique).
                        Since $\phi(\hat z, \hat \xi) = 0.5$, the algorithm updates $\mathcal D = \{0, 1\}$ and $\hat \lambda = 2$.
                        This state of the algorithm is also reached in case~1.
              \end{itemize}
    \end{enumerate}
    In all outcomes, the final $\hat \lambda \in \mathop{\arg\max}_{\lambda \in \mathbb{R}_{+}} \left\{ \max_{\xi \in \Xi} \mathcal{L}(x, \xi, \lambda) \right\}$ (see Figure~\ref{fig:counterexample_function}) and the algorithm terminates correctly with $0 = LB = UB = \max_{\xi \in \Xi} \mathcal{Q}(x, \xi)$.
\end{example}

Despite the positive result in the above example, it is nevertheless possible that the final value of $\lambda$ is suboptimal. %
A suboptimal choice of $\lambda$ may lead to an invalid upper bound and a premature termination of the algorithm.
As before, we can circumvent this \emph{ex post} by indirectly checking if a better choice of $\lambda$ can lead to higher upper bound.
In particular, after running the entire column-and-constraint generation algorithm to obtain the (candidate) optimal first-stage decisions $\bm x$, one can solve problem~\eqref{eq:worst_case_problem_discrete}, shown below.

\begin{equation}\label{eq:worst_case_problem_discrete}
    \begin{aligned}
        \mathop{\text{maximize}}_{\eta, \bm \xi, \bm \mu, \bm \rho} \;\;
         &
        \eta
        \\
        \text{subject to} \;\;
         & \eta \in \mathbb R, \;\; \bm \xi \in \Xi, \\
         &
        \left.
        \begin{aligned}
             &
            \bm{\mu}^{(k)} \in \mathbb{R}^m_{+}, \;\; \bm{\rho}^{(k)} \in \mathbb{R}^{n_p}_{+},
            \\
             &
            \eta \leq
            \bm{c}(\bm{\xi})^\top \bm{x}  +  \bm{d}_\mathrm{d}(\bm\xi)^\top \bm y_\mathrm{d}^{(k)} + \one^\top \bm \rho^{(k)},
            \\
             &
            \phantom{\eta \leq}
            + (\bm h_0 - \bm T\bm x - \bm W_\mathrm{d} \bm y_\mathrm{d}^{(k)} )^\top\bm\mu^{(k)},
            \\
             &
            \bm{W}_{\mathrm{c}}^\top \bm{\mu}^{(k)} \leq  \bm{d}_\mathrm{c}(\bm{\xi}),
            \\
             & \xi_j = 1 \implies \rho_j^{(k)} \leq \one^\top_j \bm{H}^\top \bm{\mu}^{(k)}, \;\; j \in [n_p], \\
             & \xi_j = 0 \implies \rho_j^{(k)} \leq 0, \;\; j \in [n_p],
        \end{aligned}\right\} \;\; k \in [|\mathcal D|].
    \end{aligned}
\end{equation}

Similar to Algorithm~\ref{algo:indicator:updated}, if the optimal value of the above problem happens to be strictly larger than the final estimate $UB$, then we can simply use its optimal solution to update $\lambda$ (see Theorem~\ref{theorem:optimal_multiplier_ex_post_general} below) and restart the column-and-constraint generation algorithm.
In doing so, we can retain the enumerated set $\mathcal D$ without re-initializing it to be empty.
As in the case of problem~\ref{eq:two_stage_ro_indicator}, we higlight that all previously generated constraints will continue to remain valid.
Similarly, if the loop terminating in line~\ref{algo:ccg:inner:optimality:necessary-condition} outputs an optimal multiplier, then problem~\eqref{eq:worst_case_problem_discrete} will be solved only at most once during the entire algorithm.

The following theorem is the counterpart of Theorem~\ref{theorem:optimal_multiplier_ex_post_indicator} for problem~\ref{eq:two_stage_ro_general}.%
\begin{theorem}\label{theorem:optimal_multiplier_ex_post_general}
    Suppose $\mathcal D \subseteq \mathcal Y_\mathrm{d}$ and $\bm x \in \mathcal X$ is any feasible first-stage decision in problem~\ref{eq:two_stage_ro_general}.
    Let $(\hat{\eta}, \hat{\bm\xi}, \hat{\bm\mu}, \hat{\bm\rho})$ denote an optimal solution of problem~\eqref{eq:worst_case_problem_discrete}.
    Then, $\bar \lambda = \max_{k \in [|\mathcal D|]}\left\{ \lVert \hat{\bm \rho}^{(k)} \rVert_\infty, \lVert \bm H^\top \hat{\bm \mu}^{(k)} \rVert_\infty \right\}$ is an optimal multiplier satisfying
    \begin{equation*}
        \sup_{\bm \xi \in \Xi} \inf_{\bm y_\mathrm{d} \in \mathcal D} \mathcal Q(\bm x, \bm \xi; \bm y_\mathrm{d})
        =
        \sup_{\bm \xi \in \Xi} \inf_{\bm y_\mathrm{d} \in \mathcal D} \mathcal L(\bm x, \bm \xi, \bar{\lambda}; \bm y_\mathrm{d}).
    \end{equation*}
\end{theorem}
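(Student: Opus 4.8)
The plan is to follow the template of Theorem~\ref{theorem:optimal_multiplier_ex_post_indicator}, working throughout with the nested representations $\mathcal L(\bm x,\bm\xi,\lambda)=\inf_{\bm y_\mathrm{d}\in\mathcal Y_\mathrm{d}}\mathcal L(\bm x,\bm\xi,\lambda;\bm y_\mathrm{d})$ and $\mathcal Q(\bm x,\bm\xi)=\inf_{\bm y_\mathrm{d}\in\mathcal Y_\mathrm{d}}\mathcal Q(\bm x,\bm\xi;\bm y_\mathrm{d})$, and noting that since $\mathcal D$ is finite, $\inf_{\bm y_\mathrm{d}\in\mathcal D}$ is a minimum over the enumerated $\bm y_\mathrm{d}^{(k)}$. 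One inequality is free: weak Lagrangian duality \citep[Lemma~2]{subramanyam2022lagrangian} gives $\mathcal L(\bm x,\bm\xi,\bar\lambda;\bm y_\mathrm{d})\le\mathcal Q(\bm x,\bm\xi;\bm y_\mathrm{d})$ for each $\bm y_\mathrm{d}$, hence $\sup_{\bm\xi}\inf_{\mathcal D}\mathcal L(\bar\lambda)\le\sup_{\bm\xi}\inf_{\mathcal D}\mathcal Q$. As in the indicator proof I would also record the auxiliary bound $Z\le\sup_{\bm\xi}\inf_{\mathcal D}\mathcal L(\bar\lambda)$, where $Z$ is the optimal value of~\eqref{eq:worst_case_problem_discrete}; but, as explained below, the theorem does \emph{not} reduce to $Z=\sup_{\bm\xi}\inf_{\mathcal D}\mathcal Q$, so the real content is the reverse inequality $\sup_{\bm\xi}\inf_{\mathcal D}\mathcal Q\le\sup_{\bm\xi}\inf_{\mathcal D}\mathcal L(\bar\lambda)$.

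For the auxiliary bound I would take an optimal solution $(\hat\eta,\hat{\bm\xi},\{\hat{\bm\mu}^{(k)},\hat{\bm\rho}^{(k)}\})$ of~\eqref{eq:worst_case_problem_discrete} and, for each $k$, construct a feasible point of the dual of $\mathcal L(\bm x,\hat{\bm\xi},\bar\lambda;\bm y_\mathrm{d}^{(k)})$ — that is, the per-$k$ block of~\eqref{eq:worst_case_problem_discrete_lagrangian_duality} with $\tau=1$ and $\lambda=\bar\lambda$ — by setting $\beta_j^{(k)}=\max\{0,\ \bar\lambda(2\hat\xi_j-1)-(\bm H^\top\hat{\bm\mu}^{(k)})_j\}$. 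This choice satisfies the block constraint by construction, and a direct computation gives $\bar\lambda\hat\xi_j-\beta_j^{(k)}=\min\{\bar\lambda\hat\xi_j,\ \bar\lambda(1-\hat\xi_j)+(\bm H^\top\hat{\bm\mu}^{(k)})_j\}\ge\hat\rho_j^{(k)}$ in both cases $\hat\xi_j\in\{0,1\}$: for $\hat\xi_j=1$ one uses the indicator bound $\hat\rho_j^{(k)}\le(\bm H^\top\hat{\bm\mu}^{(k)})_j$ together with $\hat\rho_j^{(k)}\le\|\hat{\bm\rho}^{(k)}\|_\infty\le\bar\lambda$, and for $\hat\xi_j=0$ one uses $\bar\lambda\ge\|\bm H^\top\hat{\bm\mu}^{(k)}\|_\infty\ge|(\bm H^\top\hat{\bm\mu}^{(k)})_j|$. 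Hence the dual objective dominates the $k$-th right-hand side in~\eqref{eq:worst_case_problem_discrete}, which is at least $\hat\eta=Z$, so $\mathcal L(\bm x,\hat{\bm\xi},\bar\lambda;\bm y_\mathrm{d}^{(k)})\ge Z$ for every $k$ and therefore $\sup_{\bm\xi}\inf_{\mathcal D}\mathcal L(\bar\lambda)\ge Z$. Both terms in the definition of $\bar\lambda$ are used here, one for each sign case.

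For the decisive reverse inequality I would invoke strong duality~\eqref{eq:strong_duality_general} for each fixed $\bm y_\mathrm{d}^{(k)}$; since $\mathcal L(\bm x,\bm\xi,\lambda;\bm y_\mathrm{d})$ is nondecreasing in $\lambda$ and $\mathcal D$ is finite, this yields $\sup_{\bm\xi}\inf_{\mathcal D}\mathcal Q=\sup_{\lambda\ge0}\sup_{\bm\xi}\inf_{\mathcal D}\mathcal L(\lambda)$, with the inner problem reformulated by~\eqref{eq:worst_case_problem_discrete_lagrangian_duality}. It then suffices to prove that $\bar\lambda$ is already large enough at a worst-case realization $\bm\xi^\star\in\arg\max_{\bm\xi}\inf_{\mathcal D}\mathcal Q$, i.e.\ that $\mathcal L(\bm x,\bm\xi^\star,\bar\lambda;\bm y_\mathrm{d}^{(k)})=\mathcal Q(\bm x,\bm\xi^\star;\bm y_\mathrm{d}^{(k)})$ for every $k$. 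By LP duality this is equivalent to $\mathcal Q(\bm x,\bm\xi^\star;\bm y_\mathrm{d}^{(k)})$ admitting an optimal dual solution whose multiplier $\alpha$ on the penalty constraint $\phi(\bm z,\bm\xi^\star)\le0$ satisfies $\alpha\le\bar\lambda$; the minimal penalty multiplier compatible with optimality is controlled by $\max\{0,\ \max_{j:\xi^\star_j=1}(\bm H^\top\bm\mu)_j,\ \max_{j:\xi^\star_j=0}(-(\bm H^\top\bm\mu)_j)\}$, so the task becomes bounding this quantity by $\bar\lambda$.

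This last bound is the main obstacle, and it is strictly harder than in the indicator setting. There, the corresponding step closes because the worst-case problem's optimal value equals $\sup_{\bm\xi}\mathcal Q_\mathcal I$ and the construction $\tilde{\bm\rho}=\tilde\lambda\tilde{\bm\xi}$ is automatically nonnegative. Neither holds here: the nonnegativity constraint $\bm\rho^{(k)}\ge\bm 0$ in~\eqref{eq:worst_case_problem_discrete}, together with the exact linearization of $\xi_j(\bm H^\top\bm\mu^{(k)})_j$ admitting negative values, means that $Z$ can be \emph{strictly} smaller than $\sup_{\bm\xi}\inf_{\mathcal D}\mathcal Q$, so the argument cannot be routed through $Z$ as in the indicator case. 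I would resolve this by a complementarity dichotomy: a coordinate with $\xi^\star_j=1$ and $(\bm H^\top\bm\mu)_j<0$ belongs to a constraint the penalty need not enforce (its required multiplier is $0$) and so does not contribute to the threshold, whereas the contributing coordinates have $(\bm H^\top\bm\mu)_j>0$ and are exactly those that~\eqref{eq:worst_case_problem_discrete} represents faithfully through a nonnegative $\hat{\bm\rho}$, which $\bar\lambda$ dominates. Turning this informal dichotomy into a rigorous statement that the minimal penalty multiplier at $\bm\xi^\star$ never exceeds $\bar\lambda$ is where the proof must be most careful.
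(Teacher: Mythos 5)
There is a genuine gap in your proposal, but it is exactly the right gap to have found. Your first two steps (the bound $Z\le\sup_{\bm\xi\in\Xi}\inf_{\bm y_\mathrm{d}\in\mathcal D}\mathcal L(\bm x,\bm\xi,\bar\lambda;\bm y_\mathrm{d})$ via a feasible dual $\bm\beta$, and weak duality) match the paper's first half. But the decisive reverse inequality is never established: your ``complementarity dichotomy'' stays informal, so the proposal does not prove the theorem. The paper closes this step by precisely the route you rejected: it defines $\mathcal T$ as problem~\eqref{eq:worst_case_problem_discrete_lagrangian_duality} with $\tau=1$ and $\lambda\ge0$ a decision variable, notes $Z_{\mathcal T}=\sup_{\bm\xi\in\Xi}\inf_{\bm y_\mathrm{d}\in\mathcal D}\mathcal Q(\bm x,\bm\xi;\bm y_\mathrm{d})$ by strong duality, and then claims $Z\ge Z_{\mathcal T}$ by mapping any $\mathcal T$-feasible point $(\tilde\lambda,\tilde\eta,\tilde{\bm\xi},\tilde{\bm\mu},\tilde{\bm\beta})$ into problem~\eqref{eq:worst_case_problem_discrete} via $\tilde{\bm\rho}^{(k)}=\tilde\lambda\tilde{\bm\xi}-\tilde{\bm\beta}^{(k)}$, calling this ``an identical line of argument as before''.

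Your objection to that route is sound, and it is the paper's step that is invalid: the map preserves the indicator constraints (for $\tilde\xi_j=1$ the requirement $\tilde\rho^{(k)}_j\le(\bm H^\top\tilde{\bm\mu}^{(k)})_j$ is exactly the $\mathcal T$-constraint; for $\tilde\xi_j=0$ one gets $\tilde\rho^{(k)}_j=-\tilde\beta^{(k)}_j\le0$), but it violates $\bm\rho^{(k)}\ge\bm0$ whenever $\tilde\xi_j=1$ and $(\bm H^\top\tilde{\bm\mu}^{(k)})_j<0$, since then $\tilde\beta^{(k)}_j\ge\tilde\lambda-(\bm H^\top\tilde{\bm\mu}^{(k)})_j>\tilde\lambda$. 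Moreover, no argument (your dichotomy included) can close the gap for~\eqref{eq:worst_case_problem_discrete} as written, because the statement itself then fails. Take $m=n_p=2$, $\Xi=\{(1,1)\}$, $\mathcal X=\{\bm 0\}$, $\bm T=\bm 0$, a purely continuous second stage with $\bm W_\mathrm{c}=\eye$, $\bm d_\mathrm{c}(\bm\xi)\equiv\one$, $\bm h_0=(1,-1/2)$, $\bm H=\bigl(\begin{smallmatrix}M&-M\\-M&M\end{smallmatrix}\bigr)$, $M$ large. Then $\sup_{\bm\xi}\mathcal Q(\bm x,\bm\xi)=1$, with unique dual optimizer $\bm\mu^*=(1,0)$ satisfying $(\bm H^\top\bm\mu^*)_2=-M<0$; the constraints of~\eqref{eq:worst_case_problem_discrete} exclude this $\bm\mu^*$ (they force $\mu_1=\mu_2$ and $\bm\rho=\bm0$), giving $Z=1/2$, $\bm H^\top\hat{\bm\mu}=\bm0$, hence $\bar\lambda=0$, while $\mathcal L(\bm x,(1,1),0)=1/2\ne1$. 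This example also defeats your dichotomy: the restriction on coordinate $2$ acts through $\bm\mu$ and destroys the value carried by the ``contributing'' coordinate $1$, so the contributing coordinates are not represented faithfully either. The repair must be made to the formulation, not the proof: declare $\bm\rho^{(k)}$ free (equivalently, impose $\xi_j=1\Rightarrow\rho^{(k)}_j=(\bm H^\top\bm\mu^{(k)})_j$ and $\xi_j=0\Rightarrow\rho^{(k)}_j=0$). With that change the substitution $\bm\rho^{(k)}\leftrightarrow\lambda\bm\xi-\bm\beta^{(k)}$ is feasibility-preserving in both directions, $Z=Z_{\mathcal T}$, and both the paper's argument and your first-step construction (which only uses $\bar\lambda\ge\lVert\hat{\bm\rho}^{(k)}\rVert_\infty$ and $\bar\lambda\ge\lVert\bm H^\top\hat{\bm\mu}^{(k)}\rVert_\infty$, hence tolerates signed $\hat{\bm\rho}$) go through.
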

\begin{proof}
    Define $\hat{\bm \beta}^{(k)} = \bar{\lambda} \hat{\bm\xi} - \hat{\bm \rho}^{(k)}$ for $k \in [|\mathcal D|]$.
    We claim that $(\bar{\lambda}, \hat{\eta}, \hat{\bm\xi}, \hat{\bm\mu}, \hat{\bm\beta})$ is feasible in problem~\eqref{eq:worst_case_problem_discrete_lagrangian_duality} with $(\tau, \lambda) = (1, \bar{\lambda})$.
    To see why, first note that if $\hat{\xi}_j = 1$, then $- \hat{\rho}^{(k)}_j \geq 0$ by definition of $\hat{\bm \rho}^{(k)}$ and if $\hat{\xi}_j = 1$, then $\bar{\lambda} - \hat{\rho}^{(k)}_j \geq 0$ by definition of $\bar{\lambda}$.
    The two cases together imply $\hat{\bm \beta}^{(k)} \geq \bm 0$.
    Next, note that the expression, $2 \bar{\lambda} \hat{\bm{\xi}} - \bm{H}^\top \hat{\bm{\mu}}^{(k)} - \hat{\bm{\beta}}^{(k)} = \hat{\bm{\rho}}^{(k)} - \bm{H}^\top \hat{\bm{\mu}}^{(k)}$, satisfies $\hat{\rho}^{(k)}_j - \one^\top_j \bm{H}^\top \hat{\bm{\mu}}^{(k)} \leq - \one^\top_j \bm{H}^\top \hat{\bm{\mu}}^{(k)}$ whenever $\hat{\xi}_j = 0$, and it satisfies $\hat{\rho}^{(k)}_j - \one^\top_j \bm{H}^\top \hat{\bm{\mu}}^{(k)} \leq 0$ whenever $\hat{\xi}_j = 1$.
    These two cases together with the definition of $\bar{\lambda}$ imply $2 \bar{\lambda} \hat{\bm{\xi}} - \bm{H}^\top \hat{\bm{\mu}}^{(k)} - \hat{\bm{\beta}}^{(k)} \leq \bar{\lambda} \one$.
    In summary, $(\bar{\lambda}, \hat{\eta}, \hat{\bm\xi}, \hat{\bm\mu}, \hat{\bm\beta})$ is feasible in problem~problem~\eqref{eq:worst_case_problem_discrete_lagrangian_duality} with $(\tau, \lambda) = (1, \bar{\lambda})$.
    The optimal value of the latter problem is precisely equal (by construction) to the right-hand side problem of the equation stated in this theorem.
    If $Z$ denotes the optimal value of problem~\eqref{eq:worst_case_problem_discrete}, then this implies
    \[
        Z \leq \sup_{\bm \xi \in \Xi} \inf_{\bm y_\mathrm{d} \in \mathcal D} \mathcal L(\bm x, \bm \xi, \bar{\lambda}; \bm y_\mathrm{d}).
    \]

    Define now $\mathcal T$ to be the (bilinear) optimization problem, which is identical to problem~\eqref{eq:worst_case_problem_discrete_lagrangian_duality} with $\tau = 1$ and the addition of $\lambda \geq 0$ as a decision variable.
    By construction, its optimal value $Z_{\mathcal T}$ satisfies the relation,
    \begin{equation*}
        \begin{aligned}
            Z
            \leq
            \sup_{\bm \xi \in \Xi} \inf_{\bm y_\mathrm{d} \in \mathcal D} \mathcal L(\bm x, \bm \xi, \bar{\lambda}; \bm y_\mathrm{d})
            \leq
            Z_{\mathcal T}
             & =
            \sup_{\lambda \geq 0} \sup_{\bm \xi \in \Xi} \inf_{\bm y_\mathrm{d} \in \mathcal D} \mathcal L(\bm x, \bm \xi, \lambda; \bm y_\mathrm{d}) \\
             & =
            \sup_{\bm \xi \in \Xi} \inf_{\bm y_\mathrm{d} \in \mathcal D} \mathcal Q(\bm x, \bm \xi; \bm y_\mathrm{d}),
        \end{aligned}
    \end{equation*}
    where the first inequality was established in the previous paragraph and the second equality follows by strong duality~\citep[Theorem~1]{subramanyam2022lagrangian}.
    Observe now that if we have $Z \geq Z_{\mathcal T}$, then the equation in the theorem follows.
    To see why $Z \geq Z_{\mathcal T}$, let $(\tilde{\lambda}, \tilde{\eta}, \tilde{\bm\xi}, \tilde{\bm\mu}, \tilde{\bm\beta})$ be a feasible solution in problem~$\mathcal T$ with objective value $\tilde{\eta}$. Then, an identical line of argument as before establishes that $(\tilde{\eta}, \tilde{\bm\xi}, \tilde{\bm\mu}, \tilde{\bm\rho})$ is feasible in problem~\eqref{eq:worst_case_problem_discrete} with the same objective value, where we define $\tilde{\bm \rho}^{(k)} = \bar{\lambda} \tilde{\bm\xi} - \tilde{\bm \beta}^{(k)}$ for all $k \in [|\mathcal D|]$. %
\end{proof}

\subsection{Impact on Computational Performance and Practical Considerations}\label{sec:computational}

Although the proposed modifications ensure correctness of the overall method and its finite termination to an optimal solution of the two-stage problem, they may be computationally intensive because of the presence of the indicator constraints in problems~\eqref{eq:worst_case_problem_indicator} and~\eqref{eq:worst_case_problem_discrete}.
Indeed, the main motivation of the original paper in developing the Lagrangian method was to circumvent the use of indicator constraints (and arbitrary upper bounds on dual variables), which were common in previous solution methods.
Indicator constraints are supported by very few solvers and their presence often significantly slows down the overall search process.
The lack of indicator constraints and arbitrary upper bounds directly contribute to the large computational speedups of the Lagrangian method over other solution methods.
Problems~\eqref{eq:worst_case_problem_indicator} and~\eqref{eq:worst_case_problem_discrete} should therefore be solved as few times as possible.

With a goal toward offering practical guidelines, we perform experiments across the entire set of 378 benchmark problems from~\citet{subramanyam2022lagrangian}.
This includes Benders decomposition for solving instances of~\ref{eq:two_stage_ro_indicator} and column-and-constraint generation for solving instances of~\ref{eq:two_stage_ro_general} and~\ref{eq:two_stage_ro_indicator}, with both continuous and mixed-integer decisions, across three problem classes.
The first two problem classes (network design and facility location) are instances of~\ref{eq:two_stage_ro_indicator} with continuous second-stage decisions, whereas the third (staff rostering) is an instance of~\ref{eq:two_stage_ro_general} with mixed-integer second-stage decisions.

None of the three problem classes satisfy any conditions that allow closed-form expressions of the optimal Lagrange multiplier.
The original paper explicitly acknowledges this for the first two problem classes but incorrectly assumes that the third class satisfies the sufficient conditions.
We therefore re-run the latter class of instances using the updated Algorithm~\ref{algo:ccg:inner}.
In doing so, we specify an initial multiplier value of $\lambda^0 = u(\bm x) - \ell(\bm x)$.
Interestingly, we find that this initial multiplier value already satisfies the necessary conditions in line~\ref{algo:ccg:inner:optimality:necessary-condition} of Algorithm~\ref{algo:ccg:inner} across all runs of all instances.
	
We implemented our experiments in Julia using JuMP as the modeling language and CPLEX~22.1.1 (with default options) as the solver. All runs were conducted on an Intel~Xeon~2.8 GHz computer, with a limit of 30~GB and eight cores per run. Since both the solver and hardware are different compared to the original paper, we re-run the original method so as to draw fair comparisons. For brevity, we only report results with the column-and-constraint generation algorithm as it was found to be computationally superior to Benders decomposition in the experiments of the original paper.
	
Table~\ref{table:results} summarizes the computational overhead of the proposed algorithmic modifications on all three benchmark problem classes. For ease of presentation, we only report statistics across those instances for which the original method terminated within the time limit (one~hour for the network design problem and two~hours for the facility location and staff rostering problem). The column `Opt' reports the number of instances for which the final solution obtained by that method was in fact, provably optimal. In the case of solutions obtained using the original method, this is checked \emph{ex post} by solving the indicator-constrained problems~\eqref{eq:worst_case_problem_indicator} and~\eqref{eq:worst_case_problem_discrete}.
For fair comparison, we do not include their solution times in the corresponding column `$t$ (s)'.
In the case of the proposed modification, we not only include this time in the corresponding column `$t$ (s)', but we also include any additional iterations and runtimes that may have been incurred by restarting the original method with the updated value of $\lambda$. The columns `\#It.' and `$n_\mathrm{restarts}$' report the corresponding average number of iterations\footnote{For the staff rostering problem, the column `\#It.' reports the total number of iterations in the inner-level column-and-constraint generation algorithm (summed across all outer-level iterations), to be consistent with the original paper.} and total number of restarts.
	
	\begin{table}[!htbp]
		\centering
		\caption{Computational performance of the proposed algorithmic modifications.}
		\label{table:results}
		\begin{tabularx}{\textwidth}{r*{6}{R}r}
			\toprule
			& \multicolumn{3}{c}{Original method} & \multicolumn{3}{c}{Proposed modification} & \\
			\cmidrule(r){2-4}\cmidrule(l){5-7}
			Instance & Opt$^*$ & \#It.$^\dagger$ & $t$ (s)$^\dagger$ &Opt & \#It. & $t$ (s) &  $n_\mathrm{restarts}$ \\
			\midrule
			& & & \multicolumn{2}{c}{Network design} & & & \\
			dfn-bwin & 16/16 &    67.4 &    33.6 & 16/16 &    67.4 &    41.2 &  0 \\
			dfn-gwin & 10/10 &    66.8 &    11.7 & 10/10 &    66.8 &    14.5 &  0 \\
			di-yuan & 11/12 &    88.7 &    13.7 & 12/12 &    88.8 &    13.4 &  1 \\
			\cmidrule{2-8}
			& 37/38 &    73.9 &    21.6 & 38/38 &    74.0 &    25.4 &  1 \\
			\midrule
			& & & \multicolumn{2}{c}{Facility location} & & & \\
			$k=1$ &  35/35   &     6.6 &    10.1 &  35/35   &     6.6 &     9.7 &  0 \\
			$k=2$ &  35/35   &    16.3 &   331.3 &  35/35   &    16.3 &   335.0 &  0 \\
			$k=3$ &  26/26   &    29.0 &   625.5 &  26/26   &    29.0 &   608.1 &  0 \\
			$k=4$ &  18/18   &    26.2 &   196.1 &  18/18   &    26.2 &   190.0 &  0 \\
			\cmidrule{2-8}
			& 114/114  &    17.8 &   278.5 & 114/114  &    17.8 &   274.5 &  0 \\
			\midrule
			$(T, k)$ & & & \multicolumn{2}{c}{Staff rostering} & & & \\
			(12, 3)  & 10/10  &    55.2 &    20.2 & 10/10  &    55.2 &    20.0 &  0 \\
			(12, 6)  & 10/10  &    87.1 &   182.7 & 10/10  &    87.1 &   196.6 &  0 \\
			(12, 9)  & 10/10  &    72.3 &    18.3 & 10/10  &    72.3 &    21.6 &  0 \\
			(24, 3)  &  3/3  &   567.0 &  2362.0 &  3/3  &   567.0 &  2816.0 &  0 \\
			(24, 6)  &  1/1  &   970.0 &  3050.4 &  1/1  &   970.0 &  3145.7 &  0 \\
			\midrule
			 & 34/34  &   141.7 &   363.2 & 34/34  &   141.7 &   411.1 &  0 \\
			\bottomrule
			\multicolumn{8}{l}{\footnotesize$^\dagger$ Differences compared to~\cite{subramanyam2022lagrangian} are due to solver and hardware alone.}\\
			\multicolumn{8}{l}{\footnotesize$^*$ Exactness of the original method is decided \emph{ex post} using the proposed modification.}\\
			\multicolumn{8}{l}{\footnotesize\phantom{$^*$} The corresponding runtimes are not included in the adjoining column `$t$ (s)'.}
		\end{tabularx}
	\end{table}

Table~\ref{table:results} shows that the computational overhead of the proposed algorithmic modifications is minor. The number of iterations is identical to the original method in all but one instance. In other words, the \emph{ex post} solution of problems~\eqref{eq:worst_case_problem_indicator} and~\eqref{eq:worst_case_problem_discrete} revealed that their optimal values are equal to those computed using the original methods in all but one benchmark instance. In fact, although not shown in Table~\ref{table:results}, we verified that the final upper bounds computed by the original method were provably rigorous, even in those instances that did not terminate within the time limit.
In other words, the final Lagrange multipliers computed in~\citet{subramanyam2022lagrangian} are provably optimal across 377 out of 378 benchmark instances.
Moreover, the objective values of the corresponding final solutions rigorously upper bound (or equal, if terminated within the time limit) the optimal value of the original two-stage problem.
Moreover, in the only case where this was not true\footnote{Network design instance `di-yuan' with budget parameter $k=3$.}, the estimated upper bound was less than its true optimal value by less than 0.3\%, and the algorithm had to be restarted only once in this case.

Table~\ref{table:results} shows that the solution times of the original method remain mostly unchanged even with the inclusion of the proposed modifications. This is not surpring given that the indicator constrained optimization problems \eqref{eq:worst_case_problem_indicator} and \eqref{eq:worst_case_problem_discrete} are solved only once in most instances. Also, although not shown in the table, the
solution times of the indicator constrained optimization problems \eqref{eq:worst_case_problem_indicator} and \eqref{eq:worst_case_problem_discrete} are in fact slower than their counterparts with fixed $\lambda$.
Compared to the latter, their solution times are slower by a (geometric average) factor of 11.9 (network design), 1.3 (facility location), and 2.8 (staff rostering).
However, because the indicator constrained problems end up being solved only once during the entire algorithm, this slower solution time has a negligible effect on the overall computational times, which remain similar to those originally reported in~\citet{subramanyam2022lagrangian}.

We offer the following conclusions based on our observations. First, the necessary conditions that are enforced within the algorithms of the original paper appear to be almost sufficient in experiments, hinting at opportunities to generalize the conditions in Theorems~\ref{theorem:optimal_multiplier_corrected} and~\ref{theorem:optimal_multiplier_indicator}. Moreover, the computational speedups offered by the original algorithms over traditional methods are only possible because of their use of Lagrangian functions (with fixed $\lambda$) as proxies for the second-stage value function. %
We showed how to check if these proxies are rigorous by solving alternate but computationally difficult optimization problems. We therefore suggest to solve these only at the end to assess the potential suboptimality of the final first-stage decisions. If the optimality gap is not satisfactorily small, then one can obtain better solutions by warm-starting the original algorithms in the manner we showed in this paper.

\section*{Acknowledgements}
H. Lefebvre acknowledges support by the German Bundesministerium f\"{u}r Bildung und Forschung within the project ``RODES'' (F\"{o}rderkennzeichen 05M22UTB). A. Subramanyam acknowledges support by the U.S. National Science Foundation under grant DMS-2229408.

\bibliographystyle{plainnat}
\bibliography{references.bib}

\end{document}